\numberwithin{equation}{section}
\theoremstyle{plain}
\newtheorem{Theorem}{Theorem}[section]
\newtheorem{Lemma}[Theorem]{Lemma}
\theoremstyle{remark}
\newtheorem{Rem}[Theorem]{Remark}
\theoremstyle{definition}
\DeclareMathOperator{\N}{\mathbb{N}}
\DeclareMathOperator{\R}{\mathbb{R}}
\DeclareMathOperator{\Prob}{\mathbb{P}}
\DeclareMathOperator{\E}{\mathbb{E}}
\DeclareMathOperator{\1}{\mathbbm{1}}
\newcommand{\mn}{\mathbb{N}}
\newcommand{\lin}{\underset{n\to\infty}{\lim}}
\newcommand{\lix}{\underset{x\to\infty}{\lim}}
\title{Functional limit theorems for Galton-Watson processes with very active immigration}
\author{Alexander Iksanov\footnote{Faculty of Computer Science and Cybernetics, Taras Shevchenko National University of Kyiv, 01601 Kyiv, Ukraine \ \ e-mail:
iksan@univ.kiev.ua} \ and Zakhar Kabluchko\footnote{Institut f\"{u}r Mathematische Statistik, Westf\"{a}lische Wilhelms-Universit\"{a}t M\"{u}nster, 48149 M\"{u}nster, Germany \ \ e-mail: zakhar.kabluchko@uni-muenster.de}}
\begin{document}

\thispagestyle{empty}
\maketitle

\begin{abstract}
We prove weak convergence on the Skorokhod space of Galton-Watson processes with immigration, properly normalized, under the assumption that the tail of the immigration distribution has a logarithmic decay. The limits are extremal shot noise processes.
By considering marginal distributions, we recover the results of Pakes~[\textit{Adv.\ Appl.\ Probab.},  11(1979), 31-–62].

\vspace*{1mm}
\noindent
\emph{Keywords:} extremal process; functional limit theorem; Galton-Watson process with immigration; perpetuity

\vspace*{1mm}
\noindent
2000 Mathematics Subject Classification: Primary: 60F17 \\
\hphantom{2000 Mathematics Subject Classification: }Secondary: 60J80
\end{abstract}

\section{Introduction and main result} \label{sec:Intro_and_main_results}

In this paper we are concerned with the Galton-Watson processes
(GW processes, in short) with immigration. Below we outline the
setting and refer to the classical treatises
\cite{Asmussen+Hering:1983, Athreya+Ney:1972} for more details on
the GW processes with and without immigration.

Let $(X_{i,k})_{i\in\mn, k\in\mn_0}$ and $(J_k)_{k\in\mn_0}$, where $\mn_0:=\mn\cup\{0\}$, be mutually independent families of independent and
identically distributed (i.i.d.)\ random objects, where each
$X_{i,k}:=(X_{i,k}(n))_{n\in\mn_0}$ is a GW
process with $X_{i,k}(0)=1$ and $\E X_{i,k}(1)=\mu\in (0,\infty)$,
and each $J_k$ is a nonnegative integer-valued random variable with
$\Prob\{J_k=0\}<1$. The random
sequence $Y:=(Y_n)_{n\in\mn_0}$ defined by
$$
Y_n:=\sum_{k=0}^n\sum_{i=1}^{J_k}X_{i,k}(n-k),\quad n\in\mn_0
$$
is called a {\it Galton-Watson process with immigration}. The random variable $J_k$ represents the number of immigrants which arrived at time $k$, while the GW process $(X_{i,k}(n))_{n\in\mn_0}$ represents the number of descendants of the $i$th immigrant which arrived at time $k$, for $1\leq i \leq J_k$.

Let $J$ denote a random variable with the same law as the $J_k$'s.
It is known that the asymptotic behavior of $Y_n$ depends heavily upon the finiteness of the logarithmic moment $\E \log^+ J$, where $\log^+ x = \max(\log x,0)$. In the supercritical case $\mu>1$, the a.s.\ limit $\lim_{n\to\infty} Y_n/\mu^n$ exists and is finite a.s. provided that $\E \log^+ J<\infty$, whereas  $\lim_{n\to\infty} Y_n/c^n=\infty$ a.s.\ for every $c>0$ if $\E \log^+ J = \infty$; see~\cite{Seneta:1970a}, \cite{Seneta:1970b}.
In the subcritical case $\mu<1$, $Y_n$ converges in distribution to a non-degenerate random variable provided that $\E \log^+ J<\infty$, whereas $Y_n$ diverges to $+\infty$ in probability if $\E \log^+ J = \infty$; see~\cite{Heathcote:1966}. These and more refined results can be found in~\cite{Asmussen+Hering:1983}, Theorems 6.1 and 6.4.

Let $D:=D[0,\infty)$ denote the Skorokhod space of
right-continuous functions defined on $[0,\infty)$ with finite
limits from the left at positive points.
We intend to prove functional limit theorems for the process $\log^+(Y_{[n\cdot]})$ in $D$ as $n\to\infty$ under the assumption
\begin{equation}\label{stand}
\Prob\{\log J>x\}\sim x^{-\alpha}\ell(x),\quad x\to\infty
\end{equation}
for some $\alpha\in (0,1]$ and some $\ell$ slowly varying at $\infty$, which justifies the term ``very active immigration''.

To state our results we need to introduce certain Poisson random measures which appear as limits for the extremal order statistics of the sequence $(\log^+ J_k)_{k\in\N_0}$. Denote by $M_p$ the set of point measures $\nu$ on $[0,\infty)\times (0,\infty]$ which satisfy
\begin{equation}\label{1}
\nu([0,T]\times [\delta,\infty])<\infty
\end{equation}
for all $T>0$ and all $\delta>0$.  For positive $a$ and $b$, let
$$
N^{(a, b)}:=\sum_{k=1}^{\infty}
\varepsilon_{(t_k^{(a,b)},\,j_k^{(a,b)})}
$$
be a Poisson
random measure on $[0,\infty)\times (0,\infty]$ with mean measure
$\mathbb{LEB}\times \mu_{a, b}$, where $\varepsilon_{(t,\,x)}$
is the probability measure concentrated at $(t,x)\in [0,\infty)\times (0,\infty]$, $\mathbb{LEB}$ is the Lebesgue
measure on $[0,\infty)$, and $\mu_{a,b}$ is a measure on
$(0,\infty]$ defined by
$$
\mu_{a, b}\big((x,\infty]\big)=ax^{-b}, \ \ x>0.
$$
Throughout the paper we
use $\Rightarrow$ to denote weak convergence on the Skorokhod
space $D$ equipped with the $J_1$-topology (see \cite{Billingsley:1968, Lindvall:1973} for the necessary background) and on $M_p$ endowed with the vague topology.

Theorem \ref{main1} treats the situation in which the behaviour in
mean of the GW processes $X_{i,k}$ affects the limit behavior of
$Y$ (except in the less interesting case $\mu=1$), whereas in the
situation of Theorem \ref{main2} any traces of the $X_{i,k}$
disappear in the limit. We stipulate hereafter that the supremum
over the empty set is equal to zero.
\begin{Theorem}\label{main1} Assume that for some $c>0$,
\begin{equation}\label{2}
\lix x\Prob\{\log J>x\}=c.
\end{equation}
Then, as $n\to\infty$,
\begin{equation}\label{317}
\frac {\log^+ (Y_{[n\cdot]})}{n}\quad \Rightarrow \quad \underset{t_k^{(c,1)}\leq
\cdot}{\sup}\big(j_k^{(c,1)} + (\cdot-t_k^{(c,1)})\log\mu\big).
\end{equation}
\end{Theorem}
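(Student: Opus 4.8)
\emph{Plan of proof.} The proof will combine three ingredients. First, we shall show that the empirical point process of the rescaled immigration epochs and sizes converges to $N^{(c,1)}$. Second, we shall identify the right-hand side of \eqref{317} as an almost surely continuous functional of that point process. Third, we shall establish a two-sided estimate showing that $n^{-1}\log^+Y_{[n\cdot]}$ agrees, up to a uniformly negligible error, with the same functional applied to the empirical point process. Slutsky's lemma will then give \eqref{317}.

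\emph{Point process convergence and the limiting functional.} By \eqref{2} one has $n\,\Prob\{\log^+J>nx\}\to cx^{-1}=\mu_{c,1}((x,\infty])$ for every $x>0$, so by the standard theory of empirical point processes of heavy-tailed sequences
$$
\mathcal N_n:=\sum_{k\geq 0}\varepsilon_{(k/n,\ n^{-1}\log^+J_k)}\ \Rightarrow\ N^{(c,1)}\quad\text{in }M_p.
$$
Define $T\colon M_p\to D$ by $T(\nu)(t):=\sup\{x+(t-s)\log\mu:(s,x)\in\operatorname{supp}\nu,\ s\leq t\}$, with supremum over the empty set equal to $0$. Since $T(\nu)(t)=t\log\mu+R_\nu(t)$ with $R_\nu(t):=\sup\{x-s\log\mu:(s,x)\in\operatorname{supp}\nu,\ s\leq t\}$ non-decreasing in $t$, and since $\sup\{x:(s,x)\in\operatorname{supp}N^{(c,1)},\,s\leq T\}<\infty$ a.s.\ for every $T$, the process $T(N^{(c,1)})$ is a well-defined element of $D$, and it equals the right-hand side of \eqref{317}. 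We shall verify that $T$ is continuous at $N^{(c,1)}$ almost surely: this rests on the facts that $\mu_{c,1}$ is non-atomic, that $N^{(c,1)}$ a.s.\ has pairwise distinct time coordinates, and that the supremum defining $T(N^{(c,1)})(t)$ is a.s.\ attained at an atom whose second coordinate is bounded away from $0$; a routine truncation-at-level-$\delta$ argument then does the job. With $Z_n:=T(\mathcal N_n)$, that is $Z_n(t)=\sup_{0\leq k\leq[nt]}(n^{-1}\log^+J_k+(t-k/n)\log\mu)$, the continuous mapping theorem gives $Z_n\Rightarrow T(N^{(c,1)})$; since moreover $T(N^{(c,1)})(t)>0$ a.s.\ for every $t>0$, the positive parts appearing below are immaterial in the limit.

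\emph{The two-sided estimate.} Write $Y_n=\sum_{k=0}^nS_{k,n}$ with $S_{k,n}:=\sum_{i=1}^{J_k}X_{i,k}(n-k)$, these being independent for fixed $n$. For the upper bound we use $Y_{[nt]}\leq([nt]+1)\max_{0\leq k\leq[nt]}S_{k,[nt]}$ together with $\E[S_{k,m}\mid J_k]=J_k\mu^{m-k}$: by Markov's inequality and a union bound over the $O(n^2)$ pairs $0\leq k\leq m\leq[nT]$, with probability tending to $1$ we have $S_{k,m}\leq J_k\mu^{m-k}e^{\eta n}$ for all such pairs, whence $n^{-1}\log^+Y_{[nt]}\leq Z_n(t)^+ +\eta+o(1)$ uniformly in $t\in[0,T]$. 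For the lower bound we fix $\delta>0$ and note that the significant indices $k\leq[nT]$ with $\log^+J_k>\delta n$ converge to the (a.s.\ finitely many) atoms of $N^{(c,1)}$ of height exceeding $\delta$. For such a $k$ one has $J_k\to\infty$, and $S_{k,[nt]}=\sum_{i=1}^{J_k}X_{i,k}(m)$ with $m=[nt]-k$ is bounded below as follows. In the supercritical case the Seneta--Heyde normalisation $X_{i,k}(m)/c_m\to W'_{i,k}$ (a.s., with $\Prob\{W'>0\}=1-q>0$ for the extinction probability $q<1$, and $c_m=\mu^{m+o(m)}$) shows that, for each $\varepsilon>0$, a proportion at least $(1-q)/2$ of the $J_k$ copies eventually satisfy $X_{i,k}(m)\geq\mu^{m(1-\varepsilon)}$. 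In the (sub)critical case $\Prob\{X(m)>0\}^{1/m}\to\mu$ shows that a proportion of order $\Prob\{X(m)>0\}$ of the copies are positive, hence $\geq1$. In either case a law of large numbers over the $J_k$ independent copies gives $n^{-1}\log^+S_{k,[nt]}\geq(n^{-1}\log^+J_k+(t-k/n)\log\mu)^+-o(1)$, uniformly in $t\in[k/n,T]$. Since $Y_{[nt]}\geq S_{k,[nt]}$, taking the maximum over the significant indices, letting $n\to\infty$ and then $\delta\downarrow0$, and treating $t$ in a neighbourhood of $0$ separately (there all three processes are uniformly small), we obtain $\sup_{t\in[0,T]}|n^{-1}\log^+Y_{[nt]}-Z_n(t)|\to0$ in probability for every $T>0$. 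Together with $Z_n\Rightarrow T(N^{(c,1)})$ and Slutsky's lemma this will prove \eqref{317}.

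\emph{The main obstacle.} The delicate step is the lower bound. Since we assume only $\mu=\E X_{i,k}(1)\in(0,\infty)$ --- in particular no $x\log x$ moment and no finite variance --- the Kesten--Stigum theorem is unavailable, so the argument has to be run through the more robust Seneta--Heyde normalisation (respectively through the exact exponential decay rate of the extinction probability), complemented by uniform-in-$m$ control and a law of large numbers over the growing number of immigrant lineages in a significant batch. Converting these per-batch bounds into uniform-on-compacts convergence --- which is what upgrades the $J_1$-convergence of $Z_n$ to that of $n^{-1}\log^+Y_{[n\cdot]}$ --- is the other, essentially bookkeeping, difficulty.
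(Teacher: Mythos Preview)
Your outline is essentially the paper's strategy, reorganised: the paper too starts from the point-process convergence $N_n\Rightarrow N^{(c,1)}$, passes to a.s.\ convergence via Skorokhod representation, splits off the contribution of batches with $\log J_k\leq\gamma n$ (your ``non-significant'' indices; its Lemma~\ref{aux3}), and controls each large batch by precisely the tools you name---Seneta--Heyde norming when $\mu>1$, and the exact exponential decay rate $p_{n+1}/p_n\to\mu$ of the survival probability when $\mu\leq 1$ (its Lemmas~\ref{aux1} and~\ref{aux2}). So the substantive probabilistic ingredients coincide.

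There is, however, a step in the supercritical case $\mu>1$ that is not ``bookkeeping'' and that your sketch does not cover. Your lower bound yields only $n^{-1}\log^+Y_{[nt]}\geq Z_n^{(\delta)}(t)^+-o(1)$, the maximum over \emph{significant} indices; to close the sandwich with $Z_n(t)$ you need $Z_n(t)-Z_n^{(\delta)}(t)^+\to 0$ after $n\to\infty$, $\delta\downarrow 0$. When $\mu>1$ a non-significant index $k$ near $0$ contributes up to $\delta+t\log\mu$ to $Z_n(t)$, which is \emph{not} small and is not automatically dominated by any significant batch. One must show that, with probability tending to $1$, some significant batch also arrives by time $o(1)$. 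The paper does this by a two-scale argument (its Property~4 and the proof of~\eqref{eq:333}): the number of atoms of $N^{(c,1)}$ in $(0,\sqrt\gamma)\times(\gamma,\infty)$ is Poisson with mean $\sim c/\sqrt\gamma\to\infty$, so for small $\gamma$ a significant batch arrives before time $n\sqrt\gamma$ and its descendants furnish the missing $t\log\mu$ up to error $O(\sqrt\gamma)$. Exactly the same accumulation-at-origin property is what makes your functional $T$ a.s.\ continuous at $N^{(c,1)}$ in the $J_1$ sense when $\mu>1$; without it, vague convergence $\nu_n\to N^{(c,1)}$ does not force $T(\nu_n)(t)\geq t\log\mu-o(1)$, so the ``routine truncation'' you invoke is in fact the crux. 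In the sub/critical cases your plan is complete as stated.
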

\begin{Rem}
Realizations of the limit processes, which are called extremal shot noise processes~\cite{Dombry:2012}, are shown on Figure~\ref{figure:extremal_shot_noise}. In the critical case $\mu=1$, the limit is the well-known extremal process; see~\cite{Resnick:2008}, Sections 4.3--4.4.
We shall see in the course of the proof that in the supercritical case $\mu>1$, we also have
\begin{equation}\label{31}
\frac {\log^+ (\mu^{-[n\cdot]}Y_{[n\cdot]})} {n}\quad\Rightarrow\quad \underset{t_k^{(c,1)}\leq
\cdot}{\sup}\big(j_k^{(c,1)} - t_k^{(c,1)} \log\mu\big),
\end{equation}
in which case the marginal distributions of the limit process have support equal to $[0,\infty)$.
\end{Rem}

\begin{figure}[t]
\begin{center}
\includegraphics[width=0.32\textwidth]{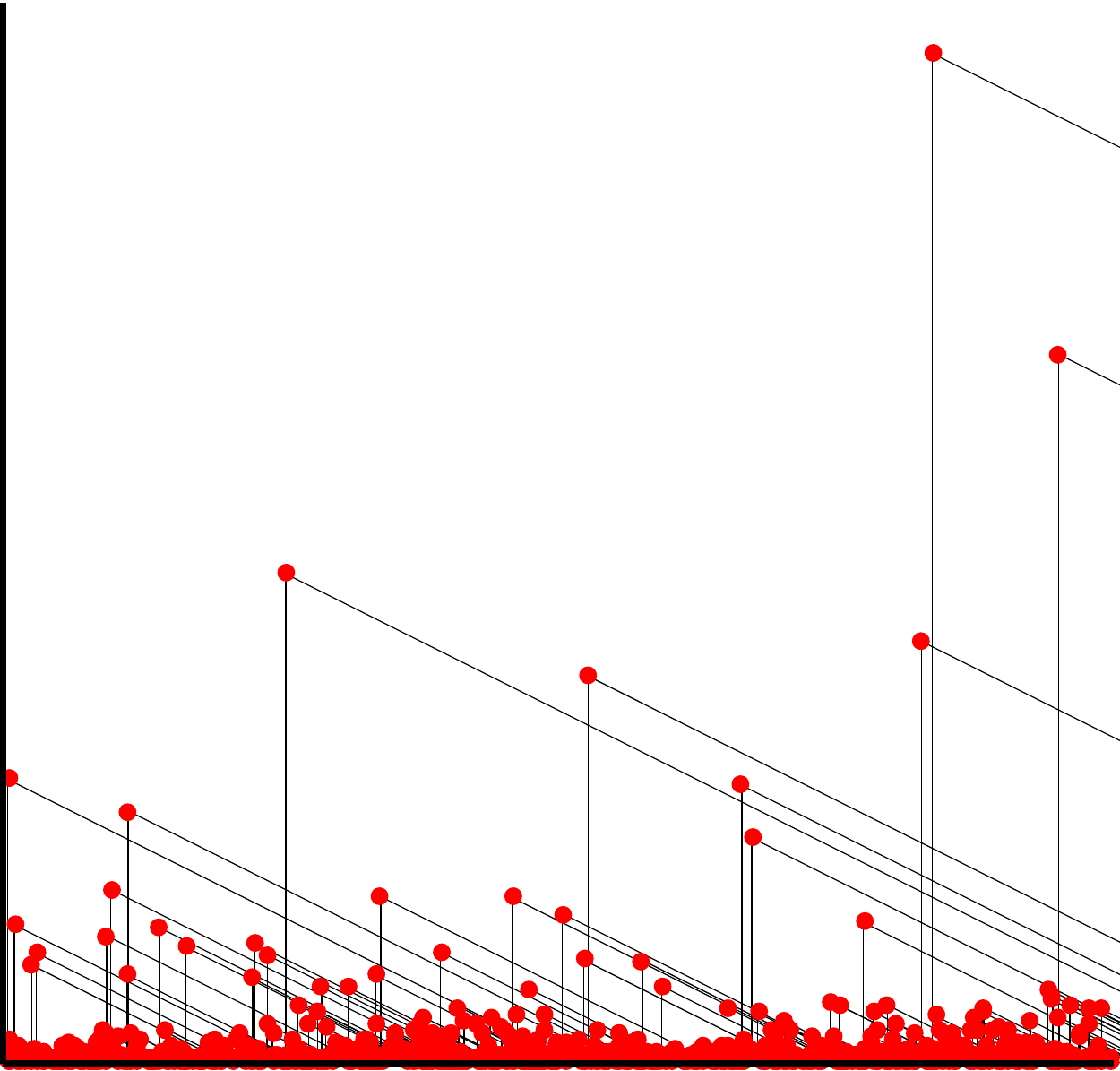}
\includegraphics[width=0.32\textwidth]{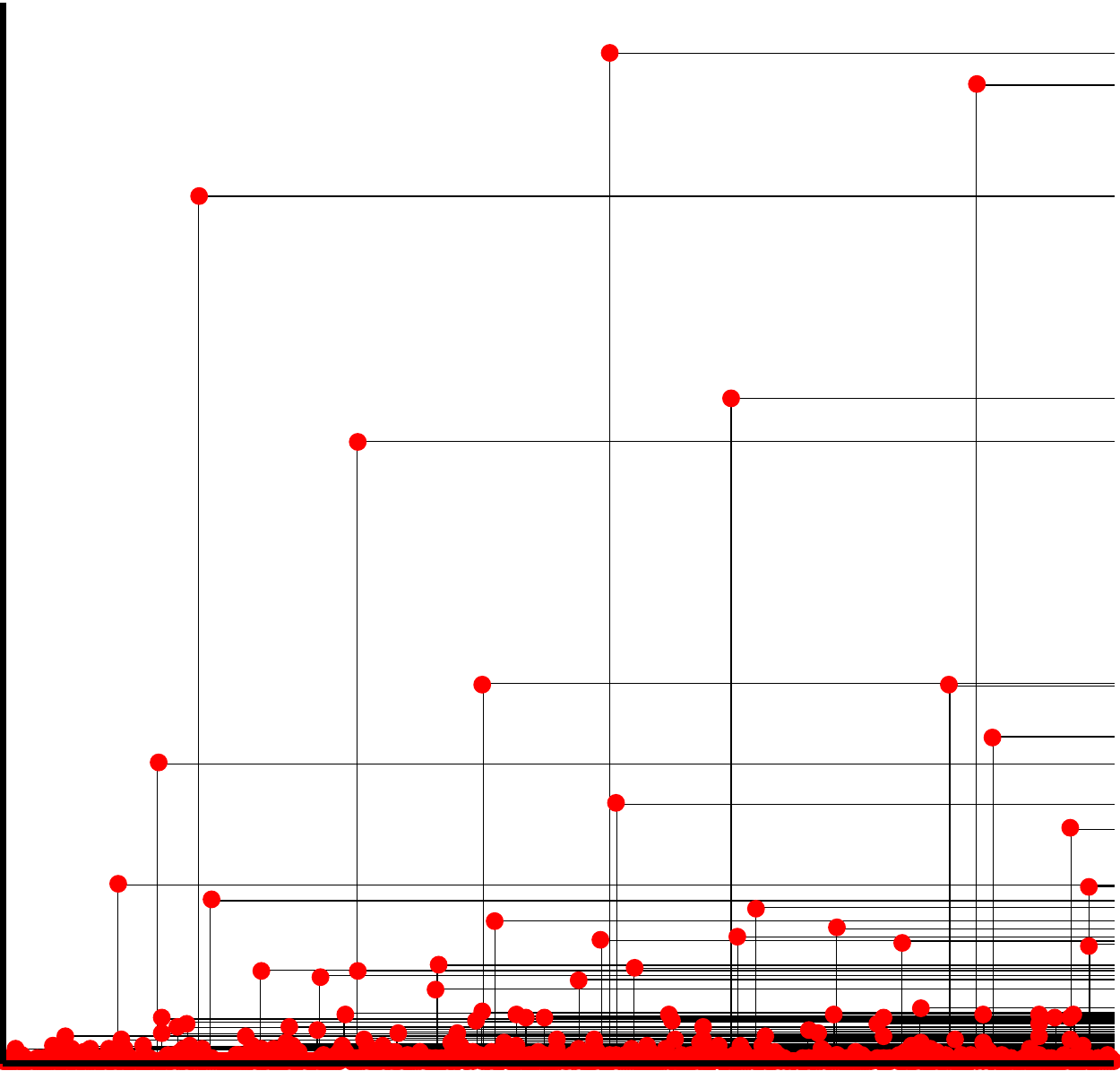}
\includegraphics[width=0.32\textwidth]{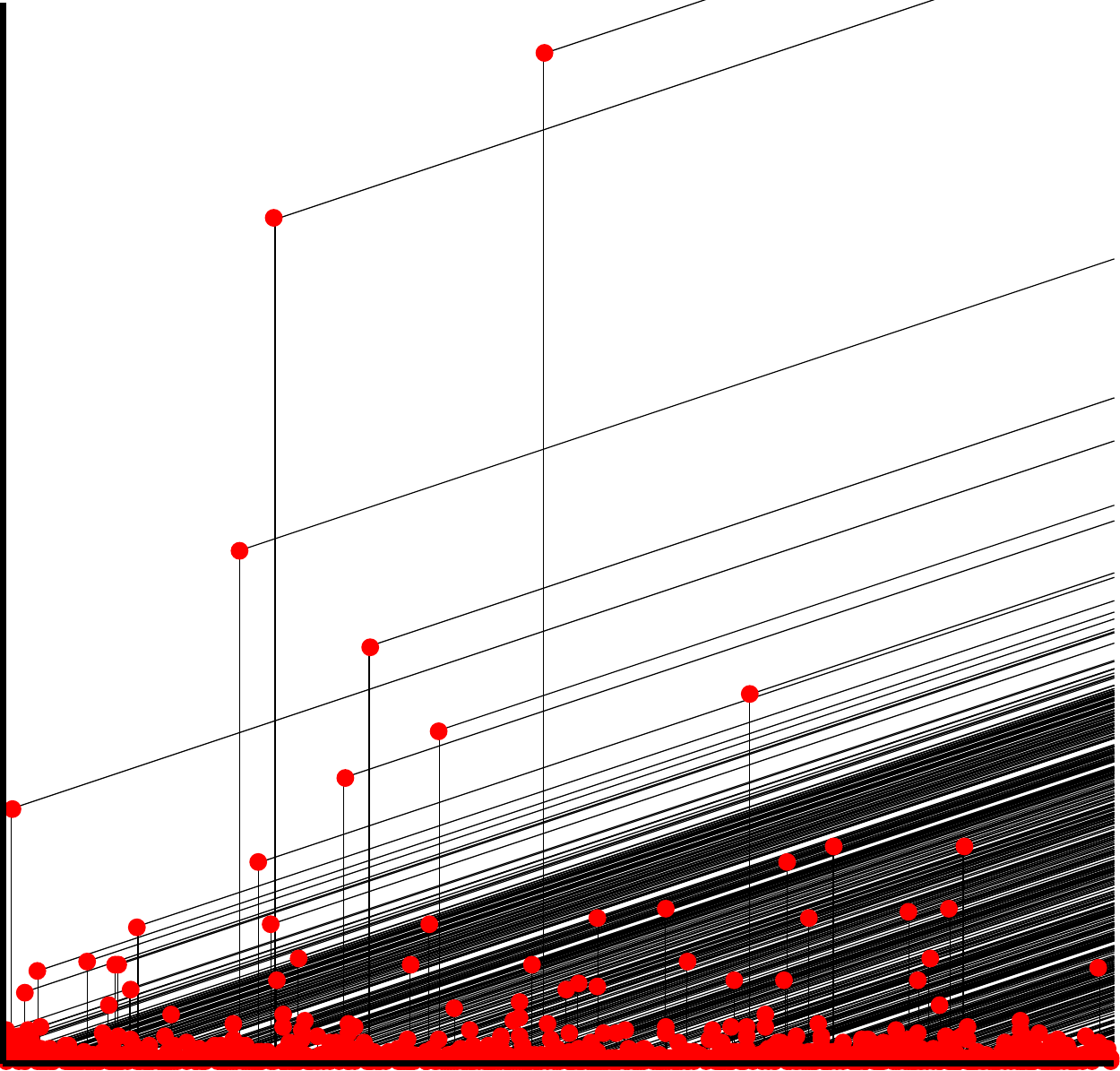}
\end{center}
\caption
{\small The limit processes appearing in Theorem~\ref{main1}. Left: subcritical case $\mu<1$.  Middle: critical case $\mu=1$. Right: supercritical case $\mu>1$. }
\label{figure:extremal_shot_noise}
\end{figure}

\begin{Theorem}\label{main2}
Suppose that \eqref{stand} holds. If $\alpha$ in \eqref{stand} equals $1$, suppose additionally that
$\lix \ell(x)=\infty$. Let $(b_n)_{n\in\N}$ be a sequence of positive numbers satisfying
$$
\lin n\Prob\{\log J>b_n\}=1.
$$
Then, as $n\to\infty$,
\begin{equation}\label{35}
\frac {\log^+ (Y_{[n\cdot]})} {b_n}\quad\Rightarrow\quad \underset{t_k^{(1,\,\alpha)}\leq
\cdot}{\sup}\,j_k^{(1,\alpha)}.
\end{equation}
\end{Theorem}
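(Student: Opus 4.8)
The plan is to deduce Theorem~\ref{main2} from three ingredients: weak convergence of the point processes that encode the extremal immigration values, a continuous-mapping step through the running-supremum functional, and a two-sided sandwich — a first-moment (Markov) estimate from above and a survival estimate for the single largest immigration batch from below — which together show that, on the scale $b_n$, the process $\log^+ Y_{[n\cdot]}$ may be replaced by $\log^+ \bar J_{[n\cdot]}$, where $\bar J_m:=\max_{0\le k\le m}J_k$. The structural fact that drives everything is that the hypotheses force $b_n/n\to\infty$: from $n\Prob\{\log J>b_n\}\to1$ and \eqref{stand} one gets $b_n$ of order $(n\ell(b_n))^{1/\alpha}$, which is $\gg n$ when $\alpha<1$, and is $\sim n\ell(b_n)\gg n$ when $\alpha=1$ by the extra assumption $\ell(x)\to\infty$. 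This is exactly what makes the mean growth rate $\mu$ of the $X_{i,k}$, still visible in Theorem~\ref{main1} where $b_n\asymp n$, disappear from the limit; note also that only the first moment of the offspring law ever enters.

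First I would establish the point process convergence. Put $N_n:=\sum_{k\ge0}\varepsilon_{(k/n,\,\log^+ J_k/b_n)}$. The $\log^+ J_k$ are i.i.d., and \eqref{stand}, regular variation, and the choice of $b_n$ give $n\Prob\{\log^+ J/b_n>x\}\to x^{-\alpha}=\mu_{1,\alpha}\big((x,\infty]\big)$ for every $x>0$; since the time coordinates lie on the grid $\{0,1/n,2/n,\dots\}$, classical extreme value theory (see \cite{Resnick:2008}) yields $N_n\Rightarrow N^{(1,\alpha)}$ in $M_p$. Now let $T$ be the running-supremum functional, $T(\nu)(t):=\sup\{x:(s,x)\in\operatorname{supp}\nu,\ s\le t\}$ with $\sup\emptyset=0$, which is well defined and $J_1$-continuous on the measurable set of those $\nu$ whose atoms lie at pairwise distinct times with no atom at $\infty$; this set has full $N^{(1,\alpha)}$-measure. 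Since $T(N_n)(t)=\max_{0\le k\le[nt]}\log^+ J_k/b_n=\log^+\bar J_{[nt]}/b_n$, the continuous mapping theorem gives
$$
\frac{\log^+\bar J_{[n\cdot]}}{b_n}\;\Rightarrow\;\underset{t^{(1,\alpha)}_k\le\cdot}{\sup}\,j^{(1,\alpha)}_k\qquad\text{in }(D,J_1).
$$

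It remains to prove that, for every $S>0$, $\sup_{t\in[0,S]}\big|\log^+ Y_{[nt]}/b_n-\log^+\bar J_{[nt]}/b_n\big|\to0$ in probability; since $d_{J_1}\le\|\cdot\|_\infty$ on $D[0,S]$ and the limit a.s.\ has no jump at $S$, this upgrades the previous display to \eqref{35}. Fix $\varepsilon>0$. For the upper estimate, conditioning on $(J_k)_{k\ge0}$ and using $\E X_{i,k}(m)=\mu^m$ gives $\E[Y_m\mid(J_k)]=\sum_{k=0}^m J_k\mu^{m-k}\le (m+1)(\mu\vee1)^m\bar J_m$, so Markov's inequality together with a union bound over $m\le[nS]$ shows that with probability tending to one $Y_{[nt]}\le e^{\varepsilon b_n}\bar J_{[nt]}$, hence $\log^+ Y_{[nt]}\le\varepsilon b_n+\log^+\bar J_{[nt]}$, for all $t\le S$ — here $b_n/n\to\infty$ absorbs the polynomial-times-exponential factor. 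For the lower estimate it suffices to consider $t$ with $\bar J_{[nt]}\ge e^{\varepsilon b_n}$ (otherwise $\log^+\bar J_{[nt]}/b_n<\varepsilon$ and the difference is already controlled by the upper estimate). For such $t$ choose $k^*$ with $J_{k^*}=\bar J_{[nt]}$ and set $m:=[nt]-k^*\le[nS]$; then $Y_{[nt]}\ge\sum_{i=1}^{J_{k^*}}X_{i,k^*}(m)\ge\#\{i\le J_{k^*}:X_{i,k^*}(m)\ge1\}$, and conditionally on $(J_k)$ this last count is binomial with $J_{k^*}$ trials and success probability $p_m:=\Prob\{X(m)\ge1\}\ge c_0^m$, where $c_0:=\Prob\{X(1)\ge1\}\in(0,1]$, hence has mean at least $e^{\varepsilon b_n}c_0^{[nS]}=e^{(\varepsilon+o(1))b_n}$. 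An elementary concentration bound, together with a union bound over the at most $([nS]+1)^2$ relevant pairs of a record index and a generation, then yields that with probability tending to one $Y_{[nt]}\ge\tfrac12\bar J_{[nt]}\,c_0^{[nS]}$, so $\log^+ Y_{[nt]}\ge\log^+\bar J_{[nt]}-[nS]\lvert\log c_0\rvert-\log2=\log^+\bar J_{[nt]}-o(b_n)$, uniformly over such $t$. Dividing by $b_n$ and letting $\varepsilon\downarrow0$ finishes the argument.

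The main obstacle is this last lower estimate: one must show that the single dominant immigration batch, even though the branching family it spawns is subcritical (or critical) and hence individually doomed, is so overwhelmingly large — of size $e^{\Theta(b_n)}$ with $b_n\gg n$ — that it still leaves a macroscopic footprint in $Y_{[nt]}$ on the logarithmic scale, and this has to be made uniform over $t\in[0,S]$ while handling the edge cases ($\bar J_{[nt]}\le1$ on a subinterval, empty record set) and checking the $J_1$-continuity of $T$ off the exceptional null set. The point process convergence and the Markov bound are routine; the survival estimate for the dominant batch and its uniformization in $t$ are where the real work lies.
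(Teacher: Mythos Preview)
Your proof is correct and takes a genuinely different, more direct route than the paper. The paper simply says that the argument of Theorem~\ref{main1} carries over: one invokes the Skorokhod representation to upgrade the point-process convergence to a.s.\ convergence, truncates the immigration at level $e^{\gamma b_n}$ and controls the small part via Lemma~\ref{aux3} (with $c_n=b_n$), and then handles each large batch separately through Lemma~\ref{aux2a}, finally stitching the pieces together with explicit time-changes $\lambda_n$ to get $J_1$-convergence. Your argument bypasses all of this machinery: you apply the continuous mapping theorem to the running-supremum functional on $M_p$ to obtain convergence of $b_n^{-1}\log^+\bar J_{[n\cdot]}$ in one stroke, and then sandwich $\log^+ Y_{[n\cdot]}$ against $\log^+\bar J_{[n\cdot]}$ in sup-norm using only a conditional first-moment (Markov) bound above and the crude survival estimate $p_m\geq c_0^{\,m}$ with a binomial concentration bound below. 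What your approach buys is economy: no Skorokhod coupling, no truncation parameter $\gamma\to 0+$, no Lemma~\ref{aux2a}; the key observation that $b_n/n\to\infty$ is exploited so aggressively that the very coarse bound $p_m\ge c_0^{\,m}$ (far weaker than Lemma~\ref{aux1}) already suffices. What the paper's approach buys is uniformity of method with Theorem~\ref{main1}, where $b_n\asymp n$ and your crude survival bound would no longer absorb the factor $c_0^{\,[nS]}$; there the finer per-batch analysis of Lemmas~\ref{aux1}--\ref{aux2} is genuinely needed.
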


\begin{Rem}
Let us derive closed formulae for the marginal distributions of the limit processes. We claim that, with $r,s>0$ and $u\geq 0$,
\begin{eqnarray}\label{38}
\Prob\left\{\underset{t_k^{(r,1)}\leq
u}{\sup}\left(j_k^{(r,1)} + s (t_k^{(r,1)}-u)\right)\leq x\right\}
&=&
\Prob\left\{\underset{t_k^{(r,1)}\leq
u}{\sup}\left(j_k^{(r,1)}-st_k^{(r,1)}\right)\leq x\right\}\notag\\&=&\left(\frac {x}
{x+su}\right)^{r/s}
\end{eqnarray}
for all $x\geq 0$.
We only provide details for the second probability. Since
$$
N:= N^{(r,1)}\big((t,y): 0\leq t\leq u,
y-st>x\big)
$$
is a Poisson random variable, we have $\Prob\big\{N=0\big\}= e^{-\E N}$ and it remains to note that
\begin{eqnarray*}
\E N=\int_0^u\int_{[0,\infty)}\1_{\{y-st>x\}}\mu_{r,\,1}({\rm d}y){\rm
d}t=r\int_0^u(x+st)^{-1}{\rm
d}t=\frac rs \log\frac {x+su}{x}.
\end{eqnarray*}
Similarly, for the marginals of the extremal process appearing in~\eqref{35} we obtain, with $a,b>0$ and $u\geq 0$,
\begin{equation*}
\Prob\left\{\underset{t_k^{(a,\,b)}\leq
u}{\sup}\,j_k^{(a,\,b)}\leq
x\right\}=\Prob\left\{N^{(a, b)}\left((t,y): 0\leq t\leq u,
y>x\right)=0\right\}= e^{-uax^{-b}}
\end{equation*}
for all $x\geq 0$. Armed with these observations we conclude that
relations \eqref{317} and \eqref{31} include the
results obtained by Pakes \cite{Pakes:1979} concerning weak convergence
of the one-dimensional distributions (Theorem 2, Theorem 6 and
Theorem 12 for the subcritical $\mu<1$, supercritical $\mu>1$ and
critical
$\mu=1$ cases, respectively). Similarly, the one-dimensional version of our relation \eqref{35}
is equivalent to the limit relations of Theorem 3 (case $\mu<1$),
Theorem 7 (case $\mu>1$) and Theorem 12 (case $\mu=1$) of
\cite{Pakes:1979}. To be more precise, Pakes states in Theorems 3
and 7 that
\begin{equation}\label{pak}
\lin \Prob\bigg\{{\log(1+Y_n)\over a_n}\leq
x\bigg\}=\exp(-(|\log\mu|)^{-1}x^{-\alpha}),\quad x\geq 0
\end{equation}
whenever
\begin{equation}\label{pak1}
1-\E(1-e^{-x})^J\sim x^{-\alpha}\ell(x)\quad x\to\infty
\end{equation}
with $a_n$ defined by $1-\E(1-e^{-a_n})^J \sim (|\log \mu|n)^{-1}$
as $n\to\infty$. Formula \eqref{pak} is misleading because it
contains $|\log \mu|$ thereby suggesting that the contribution of the
$X_{i,k}$ persists in the limit. However, the relation
\begin{equation}\label{ours}
\lin \Prob\bigg\{{\log(1+Y_n)\over b_n}\leq x\bigg\}=\exp(- x^{-\alpha}),\quad x\geq 0
\end{equation}
which is a consequence of \eqref{35} shows this is not the case.
Having observed that \eqref{pak1} is equivalent to \eqref{stand}
we infer $b_n\sim (|\log \mu|)^{-1/\alpha}a_n$ which implies that
\eqref{pak} and \eqref{ours} are actually equivalent.

Finally,  we note that unlike us, Pakes \cite{Pakes:1979} imposed a regular
variation assumption on the tail of $X_{1,1}(1)$ for the critical
case and used the Seneta-Heyde norming rather than
$\mu^{-n}$ in the supercritical case in \eqref{31}.
\end{Rem}

\begin{Rem}
One may expect that, under \eqref{stand}, $Y_n$ is well approximated by $Z_n:=\E(Y_n|(J_k)_{k\in\N_0})=\sum_{k=0}^n \mu^{n-k}J_k$ for large $n$. Although this turns out to be true, it is worth stressing that both the behavior in mean and the survival probability (especially in the subcritical case) of the underlying GW processes affect the asymptotics of $Y_n$.
The sequence $(Z_n)_{n\in\N_0}$ is a rather particular case\footnote{Just take $A_n=\mu$ and $B_n=J_n$, $n\in\N_0$.} of the much studied Markov chain $(X_n)_{n\in\N_0}$ defined by
$$X_0:=B_0\quad\text{and}\quad X_n=A_nX_{n-1}+B_n,\quad n\in\N,$$ where $(A_n, B_n)$ are i.i.d.\ $\R^2$-valued random vectors independent of $B_0$.
Functional limit theorems for $\log^+ (X_{[n\cdot]})$ were obtained in
\cite{Buraczewski+Iksanov:2015} under the assumption that
\eqref{stand} holds with $B_1$ replacing $J$ and that $\lin
(A_1\cdot\ldots\cdot A_n) =0$ a.s.\ (this corresponds to the
subcritical GW processes).
\end{Rem}

\section{Preparatory results}

 We start with a lemma that might have been known. Recall that $(X_{1,1}(n))_{n\in\N_0}$ is a GW process with $X_{1,1}(0)=1$ and mean $\mu \in (0,\infty)$.
\begin{Lemma}\label{aux1}
If $\mu\leq 1$, then for all $\delta>0$,
\begin{equation}\label{subexp}
\lin e^{\delta n}\mu^{-n}\Prob\{X_{1,1}(n)\geq 1\}=\infty.
\end{equation}
\end{Lemma}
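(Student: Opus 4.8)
The plan is to identify $\Prob\{X_{1,1}(n)\ge 1\}$ with the survival probability $q_n:=\Prob\{X_{1,1}(n)\neq 0\}$ of the Galton-Watson process (note $q_0=1$) and to show that $q_n$ decays exactly at the exponential rate $\mu$, so that the extra factor $e^{\delta n}$ in \eqref{subexp} pushes the product to $+\infty$. Let $f(s)=\E s^{X_{1,1}(1)}$ be the offspring generating function and $f_n$ its $n$-fold iterate, so that $1-q_n=f_n(0)=\Prob\{X_{1,1}(n)=0\}$ and $1-q_{n+1}=f(1-q_n)$. Since $\mu=\E X_{1,1}(1)\in(0,\infty)$ we have $p_0:=\Prob\{X_{1,1}(1)=0\}<1$, hence $f$ is strictly increasing on $[0,1]$ and, by induction, $f_n(0)<1$, i.e.\ $q_n>0$, for every $n$. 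Applying the mean value theorem and using that $f'$ is non-decreasing on $[0,1]$ gives
\begin{equation*}
q_{n+1}=f(1)-f(1-q_n)=f'(\theta_n)\,q_n\ \ge\ f'(1-q_n)\,q_n,\qquad \theta_n\in(1-q_n,1).
\end{equation*}

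The next step is to compute $\lim_{n\to\infty}f'(1-q_n)$. Because $\mu=f'(1)<\infty$, the power series $f'(s)=\sum_{k\ge 1}kp_ks^{k-1}$ is finite on $[0,1]$, and by monotone convergence it is non-decreasing and left-continuous up to $s=1$ with $f'(1)=\mu$; so $f'$ is continuous on $[0,1]$. If $\Prob\{X_{1,1}(1)=1\}<1$, then since $\mu\le 1$ extinction occurs a.s., hence $q_n\downarrow 0$ and therefore $f'(1-q_n)\to f'(1)=\mu$. In the remaining case $X_{1,1}(1)\equiv 1$ one has $\mu=1$, $q_n\equiv 1$, and \eqref{subexp} reduces to the trivial statement $e^{\delta n}\to\infty$; so from now on we may assume $q_n\to 0$ and $f'(1-q_n)\to\mu$.

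Finally, set $a_n:=e^{\delta n}\mu^{-n}q_n$, which is strictly positive. The displayed inequality yields
\begin{equation*}
\frac{a_{n+1}}{a_n}=e^{\delta}\mu^{-1}\,\frac{q_{n+1}}{q_n}\ \ge\ e^{\delta}\mu^{-1}f'(1-q_n)\ \longrightarrow\ e^{\delta}>1,\qquad n\to\infty .
\end{equation*}
Hence there exist $\rho\in(1,e^{\delta})$ and $N\in\N$ such that $a_{n+1}\ge\rho\,a_n$ for all $n\ge N$, whence $a_n\ge\rho^{\,n-N}a_N\to\infty$, which is precisely \eqref{subexp}. The argument is short, and its substantive content is only that the generation-$n$ survival probability of a (sub)critical Galton-Watson process decays on the exponential scale no faster than $\mu^n$; the points needing a little care are the justification that $f'$ is continuous up to $s=1$ (a left-limit statement, which is exactly where $\mu<\infty$ enters) and the separation of the deterministic case $p_1=1$, in which $q_n$ does not tend to $0$. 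These minor bookkeeping issues, rather than any delicate estimate, are the only real obstacles.
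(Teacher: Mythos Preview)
Your proof is correct and follows essentially the same route as the paper's. Both arguments separate out the degenerate case $\Prob\{X_{1,1}(1)=1\}=1$ and then use the generating function to show that the ratio $q_{n+1}/q_n$ tends to $\mu$ (the paper via the difference quotient $\lim_{s\to 1-}(1-f(s))/(1-s)=\mu$ evaluated along $s=f_n(0)\to 1-$; you via the mean value theorem together with the left-continuity of $f'$ at $1$), from which the conclusion $e^{\delta n}\mu^{-n}q_n\to\infty$ follows by comparing consecutive terms.
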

\begin{proof}
Put $p_n:=\Prob\{X_{1,1}(n)\geq 1\}$, $n\in\mn$. If
$\Prob\{X_{1,1}(1)=1\}=1$, then $\mu=1$ and $p_n=1$, and
\eqref{subexp} holds trivially. To deal with the remaining cases
$\mu=1$ and $\Prob\{X_{1,1}=1\}<1$ or $\mu<1$ in which $\lin
p_n=0$ we set $f_n(s):=\E s^{X_{1,1}(n)}$, $n\in\mn$, $s\in
[0,1]$. Then
$$\mu=\underset{s\to 1-}{\lim}{1-f_1(s)\over 1-s}=\lin
{1-f_1(f_n(0))\over 1-f_n(0)}=\lin {p_{n+1}\over p_n}$$ having
utilized $1-p_n=\Prob\{X_{1,1}(n)=0\}=f_n(0)\to 1-$ as
$n\to\infty$ for the second equality. This implies $\lin
{\mu^{-(n+k)}p_{n+k}\over \mu^{-n} p_n}=1$ for each $k\in\mn$ and
thereupon \eqref{subexp}.
\end{proof}

Let $(c_n)_{n\in\mn}$ be a sequence of positive numbers satisfying $\lin n^{-1}c_n=\infty$ or $c_n=n$ for all $n\in\mn$. For each $0<\gamma<1$, set
\begin{equation}\label{eq:def_Y_proc_truncated}
Y_{[n\cdot]}^{(\leq \gamma)}:=\sum_{k=0}^{[n\cdot]}\1_{\{J_k\leq e^{\gamma c_n}\}}\sum_{i=1}^{J_k}X_{i,k}([n\cdot]-k).
\end{equation}
The next lemma shows that $Y_{[n\cdot]}^{\leq \gamma}$, the contribution coming from times in which immigration is not extremely active, is negligible as $n\to\infty$ and $\gamma\to 0+$. No assumptions on the tail of $J$ are imposed in this lemma.
\begin{Lemma}\label{aux3}
Fix $T>0$ and $\gamma>0$.  For every $\delta>0$,
$$
\sum_{n\geq 1}\Prob\bigg\{\underset{0\leq t\leq T}{\sup}\,\frac 1 {c_n} \log^+\bigg(m([nt])Y_{[nt]}^{(\leq \gamma)}\bigg)>\gamma+\delta\bigg\} < \infty,
$$
where $m(n):=\mu^{-n}\wedge 1$, $n\in\N_0$ when $c_n=n$ and $m(n)=1$, $n\in\N_0$ when $\lin n^{-1}c_n=\infty$.
\end{Lemma}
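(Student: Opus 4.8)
\noindent
The plan is a soft first--moment estimate feeding into the Borel--Cantelli lemma, so no tail assumption on $J$ is needed. First I would observe that, for a fixed realization, $t\mapsto Y_{[nt]}^{(\leq\gamma)}$ and $t\mapsto m([nt])$ are right-continuous step functions whose jumps occur only at points $t\in\{1/n,2/n,\dots\}$; hence the supremum over $t\in[0,T]$ is a maximum over the finitely many values indexed by $j:=[nt]\in\{0,1,\dots,[nT]\}$, and since $\log^+$ is nondecreasing and $(\gamma+\delta)c_n>0$, the event inside the $n$-th term of the sum, call it $A_n$, equals $\bigcup_{j=0}^{[nT]}\{m(j)Y_j^{(\leq\gamma)}>e^{(\gamma+\delta)c_n}\}$, where $Y_j^{(\leq\gamma)}=\sum_{k=0}^{j}\1_{\{J_k\le e^{\gamma c_n}\}}\sum_{i=1}^{J_k}X_{i,k}(j-k)$ (with the truncation level $e^{\gamma c_n}$ depending on $n$). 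A union bound over $j$ followed by Markov's inequality applied to each nonnegative summand then reduces the claim to $\sum_{n\ge1}e^{-(\gamma+\delta)c_n}\sum_{j=0}^{[nT]}m(j)\,\E Y_j^{(\leq\gamma)}<\infty$, the union bound costing only the polynomial factor $[nT]+1$.

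The next step is to compute the mean. Conditioning on $(J_k)_{k\ge0}$ and using $\E X_{1,1}(n)=\mu^{n}$, the conditional mean of $\sum_{i=1}^{J_k}X_{i,k}(j-k)$ is $J_k\mu^{j-k}$, so by mutual independence of the families $\E Y_j^{(\leq\gamma)}=\big(\sum_{l=0}^{j}\mu^{l}\big)\,\E\big[\1_{\{J\le e^{\gamma c_n}\}}J\big]$. This is the point where the truncation does its work: $\E[\1_{\{J\le e^{\gamma c_n}\}}J]\le e^{\gamma c_n}$ regardless of whether $\E J$ is finite, whence $m(j)\,\E Y_j^{(\leq\gamma)}\le e^{\gamma c_n}\big(m(j)\sum_{l=0}^{j}\mu^{l}\big)$. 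The factor $e^{\gamma c_n}$ cancels part of $e^{-(\gamma+\delta)c_n}$, leaving the genuinely decaying factor $e^{-\delta c_n}$, and it only remains to bound $K_n:=\max_{0\le j\le[nT]}m(j)\sum_{l=0}^{j}\mu^{l}$ and to sum.

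Finally I would split into the two regimes. When $c_n=n$, so $m(j)=\mu^{-j}\wedge1$: for $\mu<1$, $m(j)=1$ and $\sum_{l\le j}\mu^l\le(1-\mu)^{-1}$; for $\mu=1$, $m(j)=1$ and $\sum_{l\le j}\mu^l=j+1\le nT+1$; for $\mu>1$, $m(j)\sum_{l\le j}\mu^l\le\mu^{-j}\cdot\mu^{j+1}/(\mu-1)=\mu/(\mu-1)$. Thus $K_n\le C(nT+1)$ in all cases, giving $\Prob(A_n)\le([nT]+1)K_n e^{-\delta n}\le C'(nT+1)^2 e^{-\delta n}$, which is summable. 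When $n^{-1}c_n\to\infty$, so $m\equiv1$: $K_n=\sum_{l=0}^{[nT]}\mu^l$ is $\le C(nT+1)$ if $\mu\le1$ and $\le C\mu^{nT}$ if $\mu>1$; using $c_n\ge n$ for all large $n$, the case $\mu\le1$ gives $\Prob(A_n)\le C'(nT+1)^2 e^{-\delta n}$, and the case $\mu>1$ gives $\Prob(A_n)\le C''([nT]+1)\exp(nT\log\mu-\delta c_n)$ with $nT\log\mu-\delta c_n=-c_n(\delta-nT\log\mu/c_n)\le-\tfrac{\delta}{2}c_n\le-\tfrac{\delta}{2}n$ for all large $n$, again summable.

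I do not anticipate a genuine obstacle; the argument is a first-moment computation. The one place needing care is the bookkeeping of the normalization $m(j)$, which behaves differently in the two regimes: in the supercritical case with $c_n=n$ the geometric decay $\mu^{-j}$ is exactly what keeps $K_n$ bounded, whereas for $c_n/n\to\infty$ one has $m\equiv1$ and the resulting geometric factor $\mu^{nT}$ must be (and is) absorbed by $e^{-\delta c_n}$ because $c_n\gg n$. A minor secondary point is that the truncation level $e^{\gamma c_n}$ depends on $n$, so the events for different $n$ are not nested and the explicit per-$n$ union bound over $j$ is the right device.
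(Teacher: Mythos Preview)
Your argument is correct and follows essentially the same route as the paper: a union bound over $j=0,\dots,[nT]$, then Markov's inequality, then the elementary bound $\E[\1_{\{J\le e^{\gamma c_n}\}}J]\le e^{\gamma c_n}$ (the paper achieves this by the pathwise domination $\1_{\{J_k\le e^{\gamma c_n}\}}\sum_{i\le J_k}X_{i,k}\le\sum_{i\le[e^{\gamma c_n}]}X_{i,k}$ before taking expectations, but the effect is identical), followed by the same case split on $\mu$ and on the growth of $c_n$. The only cosmetic difference is that the paper keeps the sum $\sum_{r\le[nT]}d(r)$ with $d(r)=m(r)\sum_{k\le r}\mu^k$, whereas you pass to $([nT]+1)\max_j$, which costs one extra harmless polynomial factor.
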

\begin{proof}
Suppose first that $c_n=n$. For $r\in\N_0$, set $d(r):=(\mu^{-r}\wedge 1)\sum_{k=0}^r\mu^k$ and note that $d(r)=r+1$ if $\mu=1$ and $d(r)\leq (1-\mu\wedge\mu^{-1})^{-1}<\infty$ if $\mu\neq 1$. For all $\delta>0$,
\begin{eqnarray*}
&&\sum_{n\geq 1}\Prob\bigg\{\underset{0\leq t\leq T}{\sup}\,\frac 1n \log^+\bigg(\big(\mu^{-[nt]}\wedge 1\big)Y_{[nt]}^{(\leq \gamma)}\bigg)>\gamma+\delta\bigg\}\\&\leq &\sum_{n\geq 1}\sum_{r=0}^{[nT]}\Prob\bigg\{\big(\mu^{-r}\wedge 1\big)\sum_{k=0}^r\1_{\{J_k\leq e^{\gamma n}\}}\sum_{i=1}^{J_k}X_{i,k}(r-k)>e^{(\gamma+\delta)n}\bigg\}\\&\leq& \sum_{n\geq 1}\sum_{r=0}^{[nT]}\Prob\bigg\{\big(\mu^{-r}\wedge 1\big)\sum_{k=0}^r\sum_{i=1}^{[e^{\gamma n}]}X_{i,k}(r-k)>e^{(\gamma+\delta)n}\bigg\}\\&\leq & \sum_{n\geq 1} e^{-\delta n}\sum_{r=0}^{[nT]}
d(r)<\infty.
\end{eqnarray*}
We have used Boole's inequality for the second line and Markov's inequality in combination with $\E X_{i,k}(r-k)=\mu^{r-k}$ for the fourth line.

When $\lin n^{-1}c_n=\infty$, the proof is similar: for all $\delta>0$,
\begin{eqnarray*}
&&\sum_{n\geq 1}\Prob\bigg\{\underset{0\leq t\leq T}{\sup}\,\frac 1 {c_n}\log^+\bigg(Y_{[nt]}^{(\leq \gamma)}\bigg)>\gamma+\delta\bigg\}\\&\leq &\sum_{n\geq 1}\sum_{r=0}^{[nT]}\Prob\bigg\{\sum_{k=0}^r\1_{\{J_k\leq e^{\gamma c_n}\}}\sum_{i=1}^{J_k}X_{i,k}(r-k)>e^{(\gamma+\delta)c_n}\bigg\}\\&\leq& \sum_{n\geq 1}\sum_{r=0}^{[nT]}\Prob\bigg\{\sum_{k=0}^r\sum_{i=1}^{[e^{\gamma c_n}]}X_{i,k}(r-k)>e^{(\gamma+\delta)c_n}\bigg\}\\&\leq & \sum_{n\geq 1} e^{-\delta c_n}([nT]+1)\sum_{k=0}^{[nT]}\mu^k<\infty,
\end{eqnarray*}
because $([nT]+1)\sum_{k=0}^{[nT]}\mu^k$ grows at most exponentially whereas $e^{-\delta c_n}$ decreases superexponentially in view of $\lin n^{-1}c_n=\infty$.
\end{proof}

In the next lemma, which is needed to prove Theorem~\ref{main1}, we identify the response functions of the limit extremal shot noise process. Roughly speaking, this lemma states that a GW process with finite mean $\mu$ starting at time $0$ with approximately $e^{an+o(n)}$ particles has at time $nt$ approximately $\mu^{nt} e^{an +o(n)}$ particles. However, there is one exception: if the process is subcritical, then the population dies out approximately at time $n a/ |\log \mu|$, and the number of particles after this time is $0$.
\begin{Lemma}\label{aux2}
Let $(A_n)_{n\in\mn}\subset \N$ be a sequence satisfying $\lin n^{-1} \log A_n = a$ for some $a>0$.
Then, for every $T>0$ and every sequence $(k_n)_{n\in\N}\subset \N_0$,
\begin{equation}\label{limit1}
\lin
\sup_{0\leq t\leq T} \left|\frac 1n \log^+\bigg(\sum_{i=1}^{A_n}X_{i,k_n}([nt])\bigg) - (a+t \log\mu)^+\right| = 0
\quad \text{{\rm a.s.}}
\end{equation}
\end{Lemma}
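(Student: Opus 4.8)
The plan is to derive, separately, an asymptotic upper and an asymptotic lower bound for $\frac 1n\log^+S_n([nt])$, where $S_n(m):=\sum_{i=1}^{A_n}X_{i,k_n}(m)$, each valid uniformly in $t\in[0,T]$. For a fixed $n$ the quantity $S_n(m)$ only involves the finitely many generations $m\in\{0,1,\dots,[nT]\}$, so both bounds will follow from a tail estimate for a single $S_n(m)$ combined with Boole's inequality over $m$ and the Borel--Cantelli lemma; in particular no independence of the arrays $(X_{i,k_n})_i$ across $n$ is needed. The passage back from $m=[nt]$ to $t$ costs only the error $|\log\mu|/n$ incurred by replacing $[nt]/n$ by $t$ in the Lipschitz function $s\mapsto(a+s\log\mu)^+$, and hence is harmless. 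Throughout, $\E X_{i,k_n}(m)=\mu^m$ and $\log A_n=na+o(n)$, so $\E S_n(m)=A_n\mu^m=\exp\{n(a+(m/n)\log\mu)+o(n)\}$ uniformly in $m\le[nT]$.

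For the upper bound, fix $\delta>0$. Markov's inequality gives, for every $m\le[nT]$,
\[
\Prob\Big\{S_n(m)>e^{n((a+(m/n)\log\mu)^++\delta)}\Big\}\le A_n\mu^m\,e^{-n((a+(m/n)\log\mu)^++\delta)}\le e^{o(n)-n\delta}\le e^{-n\delta/2}
\]
for all large $n$, uniformly in $m$, since $a+(m/n)\log\mu\le(a+(m/n)\log\mu)^+$. Summing over $m$ and then over $n$ yields a convergent series, so by Borel--Cantelli, a.s.\ for all large $n$ and all $m\le[nT]$ we have $\frac 1n\log^+S_n(m)\le(a+(m/n)\log\mu)^++\delta$; taking $m=[nt]$ and letting $\delta\downarrow0$ gives the uniform upper bound.

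The lower bound is the heart of the matter and the step I expect to be the main obstacle: it is nontrivial only where $a+(m/n)\log\mu>0$ (elsewhere $\frac 1n\log^+S_n(m)\ge0=(a+(m/n)\log\mu)^+$), yet exactly there, in the critical and subcritical cases, the individual processes satisfy $X_{i,k_n}(m)\to0$ a.s., so $S_n(m)$ is a sum of overwhelmingly many null terms and cannot be controlled from below by a naive law of large numbers; moreover no finite offspring variance is assumed, which precludes a direct second-moment bound on $S_n(m)$. To handle both issues I would truncate the offspring: for $L\in\mn$ let $\hat X^{(L)}_{i,k}$ be the Galton--Watson process with offspring law that of $X_{i,k}(1)\wedge L$, realized on the same probability space as $X_{i,k}$ by retaining at most $L$ children of each node, so that $\hat X^{(L)}_{i,k}(m)\le X_{i,k}(m)$ for all $m$. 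Its mean $\mu_L:=\E[X_{1,1}(1)\wedge L]$ increases to $\mu$ as $L\to\infty$, and since its offspring is bounded one has the classical formula $\E[(\hat X^{(L)}_{1,1}(m))^2]\le C_L(m+1)(\mu_L^m\vee\mu_L^{2m})$ for a constant $C_L$ (treating $\mu_L>1$, $\mu_L=1$, $\mu_L<1$ separately).

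To conclude, fix $\delta>0$ small and restrict attention to $m\le[nT]$ with $a+(m/n)\log\mu\ge2\delta$. Choosing $L$ large enough that $T\log(\mu/\mu_L)\le\delta/2$ forces $a+(m/n)\log\mu_L\ge\tfrac32\delta$ on this range, hence $\E\hat S_n^{(L)}(m)=A_n\mu_L^m\ge e^{n\delta}$ for large $n$, where $\hat S_n^{(L)}(m):=\sum_{i=1}^{A_n}\hat X^{(L)}_{i,k_n}(m)$. Chebyshev's inequality then yields
\[
\Prob\Big\{\hat S_n^{(L)}(m)\le\tfrac12 A_n\mu_L^m\Big\}\le\frac{\Var\big(\hat S_n^{(L)}(m)\big)}{\big(\tfrac12 A_n\mu_L^m\big)^2}\le\frac{4C_L(m+1)}{A_n}\big(\mu_L^{-m}\vee1\big),
\]
and on the range considered $A_n\ge e^{n(a-\delta/2)}$, so this bound is at most $4C_L(nT+1)e^{-n(a-\delta/2)}$ if $\mu_L\ge1$ and at most $4C_L(nT+1)(A_n\mu_L^m)^{-1}\le4C_L(nT+1)e^{-n\delta}$ if $\mu_L<1$; either way it is summable over $m\le[nT]$ and then over $n$. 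By Borel--Cantelli, a.s.\ for all large $n$ and all such $m$, $S_n(m)\ge\hat S_n^{(L)}(m)>\tfrac12 A_n\mu_L^m\ge\tfrac12 e^{n(a+(m/n)\log\mu-\delta)}$, whence $\frac 1n\log^+S_n(m)\ge(a+(m/n)\log\mu)^+-2\delta$. Combining this with the trivial bound on the complementary range, with the transition from $m=[nt]$ to $t$, and letting $\delta\downarrow0$ gives \eqref{limit1}.
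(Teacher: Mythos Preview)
Your proof is correct and follows a genuinely different path from the paper for the lower bound. The upper bound via Markov's inequality and Borel--Cantelli is essentially identical to the paper's treatment of $I_1(n)$.

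For the lower bound, the paper applies exponential Markov to $e^{-u\sum_i X^\ast_{i,0}(r)}$, exploits the submartingale property of $(e^{-uX^\ast_{1,0}(l)})_{l\ge 0}$ to reduce to the worst generation $r_n$, and then splits into three regimes: in the supercritical case it invokes the Seneta--Heyde norming to control $\E e^{-X^\ast_{1,0}(n)/L(\mu^n)}$, while in the critical and subcritical cases it relies on Lemma~\ref{aux1} (the survival probability $p_n$ does not decay faster than $\mu^n e^{-\delta n}$). Your route instead truncates the offspring law at a large level $L$, couples $\hat X^{(L)}_{i,k}\le X_{i,k}$ pathwise, and then uses Chebyshev with the explicit second-moment formula for a finite-variance Galton--Watson process; the choice of $L$ so that $T\log(\mu/\mu_L)\le\delta/2$ absorbs the mean shift uniformly over $m\le[nT]$. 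This sidesteps both the Seneta--Heyde theorem and Lemma~\ref{aux1}, and the residual case split (according to whether $\mu_L\ge 1$ or $\mu_L<1$) is handled by the same Chebyshev bound with only cosmetic differences. The price you pay is the extra coupling layer and the need to quote the variance recursion for Galton--Watson processes, but overall your argument is more self-contained and treats the three regimes more uniformly than the paper's.
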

\begin{proof}
According to the Borel-Cantelli lemma in combination with
$$\lin \underset{t\in [0,T]}{\sup}\, |[nt]/n-t|=0$$ it suffices to check that for all $\varepsilon\in (0,a)$,
$$
\sum_{n\geq 1}\Prob\bigg\{\underset{0\leq t\leq T}{\sup}\,\bigg|\log^+\bigg(\sum_{i=1}^{A_n}X_{i,0}([nt])\bigg)-(an+[nt]\log\mu)^+\bigg|>\varepsilon n\bigg\}<\infty.
$$
We write $X^\ast_{i,0}(l):=\mu^{-l}X_{i,0}(l)$, so that $\E X^\ast_{i,0}(l)=1$. By Boole's inequality the last probability is bounded from above by $I_1(n)+I_2(n)$ with
\begin{align*}
I_1(n) &:=
\sum_{r=0}^{[nT]}\Prob\bigg\{\sum_{i=1}^{A_n}X_{i,0}(r)> e^{(an+r\log\mu)^++\varepsilon n}\bigg\},\\
I_2(n) &:= \sum_{r=0}^{r_n}
\Prob\bigg\{\sum_{i=1}^{A_n}X^\ast_{i,0}(r)<e^{(a-\varepsilon)n}\bigg\},
\end{align*}
where  $r_n:=[nT]\wedge [nT_0]$ and
$$
T_0:=
\begin{cases}
(a-\varepsilon)/|\log\mu|,& \text{ if } \mu<1,\\
+\infty, &\text{ if } \mu\geq 1.
\end{cases}
$$

To prove that $\sum_{n\geq 1} I_1(n)$ is finite, note that $|\log A_n-an|\leq \frac \varepsilon 2n$ for sufficiently large $n$.
Using Markov's inequality yields
$$
\sum_{n\geq 1}I_1(n)\leq \sum_{n\geq 1}\sum_{r=0}^{[nT]} \frac{e^{\log A_n+r\log\mu}}{e^{ (an+r\log\mu)^++\varepsilon n}}
\leq
C+ \sum_{n\geq 1}([nT]+1)e^{-\frac 12 \varepsilon n}<\infty.$$

In the following, we prove that $\sum_{n\geq 1} I_2(n)$ is finite. Let $p_n:=\Prob\{X_{1,0}(n)\geq 1\}$, $n\in\N$, be the probability that a GW process starting with a single particle at time $0$  does not die out at time $n$.  We fix $u>0$ and use Markov's inequality in combination with the
fact that $(e^{-u X^\ast_{1,0}(l)})_{l\in\N_0}$ is a
submartingale w.r.t.\ the natural filtration to infer that for large enough $n$ and all $r\leq r_n$,
\begin{align}
\log\Prob \left\{\sum_{i=1}^{A_n} X^\ast_{i,0}(r)<e^{(a-\varepsilon)n}\right\}
&\leq
ue^{(a-\varepsilon)n} + A_n\log \E e^{-uX^\ast_{1,0}(r_n)} \label{import} \\
&\leq
ue^{(a-\varepsilon)n}+A_n\log(1-p_{r_n}+p_{r_n}e^{-\mu^{-r_n}u}) \notag\\
&\leq
ue^{(a-\varepsilon)n}-A_n(1-e^{-\mu^{-r_n}u})p_{r_n} \notag\\
&\leq ue^{(a-\varepsilon)n}-e^{(a-\frac 12 \varepsilon)n}(1-e^{-\mu^{-r_n}u})p_{r_n}.\label{import2}
\end{align}
Further, we consider the three cases separately.

\vspace*{2mm}
\noindent {\sc Supercritical case $\mu>1$}. It is well known (see, for instance, Theorem 5.1 on p.~83 together
with Corollary 5.3 on p.~85 in \cite{Asmussen+Hering:1983}) that
there exists a function $L$ slowly varying at $\infty$ with $\liminf_{x\to\infty} L(x)>0$
such that, as $n\to\infty$, $X^\ast_{1,0}(n)/L(\mu^n)$ converges
a.s.\ to a random variable $W$, say, which is positive with
positive probability\footnote{The sequence $(\mu^n L(\mu^n))$ is
known as the Seneta-Heyde norming.}. In particular, by the dominated convergence,
$$
c := \lin \log \E e^{-X^\ast_{1,0}(n)/L(\mu^n)} = \log \E e^{-W}   < 0.
$$
With
$u=1/L(\mu^{[nT]})$, inequality \eqref{import} takes the form
$$
\log\Prob\left\{\sum_{i=1}^{A_n} X^\ast_{i,0}(r)<e^{(a-\varepsilon)n}\right\} < \frac{e^{(a-\varepsilon) n}}{L(\mu^{[nT]})} + (c+o(1))A_n.
$$
The right-hand side goes to $-\infty$ as $n\to\infty$ exponentially fast  because $A_n > e^{(a-\frac 12 \varepsilon)n}$ for large $n$ and $L$ is slowly varying, thereby proving $\sum_{n\geq 1}I_2(n)<\infty$.

\vspace*{2mm}
\noindent {\sc Critical case $\mu=1$}. With $u>0$ fixed, inequality \eqref{import2} takes the form
$$
\log\Prob\left\{\sum_{i=1}^{A_n} X^\ast_{i,0}(r)<e^{(a-\varepsilon)n}\right\}
\leq ue^{(a-\varepsilon)n} - e^{(a-\frac 12 \varepsilon) n}(1-e^{-u})p_{[nT]}.
$$
In view of \eqref{subexp} this goes to $-\infty$ as $n\to\infty$ exponentially fast, whence $\sum_{n\geq 1}I_2(n)<\infty$.

\vspace*{2mm}
\noindent {\sc Subcritical case $\mu<1$}. With $u=e^{-(a-\varepsilon)n}$, expression \eqref{import2} takes the form
$$
1-(1-e^{-\mu^{-[nT_0]} e^{-(a-\varepsilon)n}})p_{[nT_0]}e^{(a-\frac 12 \varepsilon)n}
\leq
1-(1-e^{-\mu}) p_{[nT_0]}\mu^{-{[nT_0]}}e^{\frac 12 \varepsilon n},
$$
because $\mu<\mu^{-{[nT_0]}}e^{-(a-\varepsilon)n}\leq 1$.  In view of \eqref{subexp} this goes to $-\infty$ as $n\to\infty$ exponentially fast, thus proving that $\sum_{n\geq 1}I_2(n)<\infty$ in this case, too.
\end{proof}
In the next lemma, we consider a GW process starting at time $0$ with $e^{(a + o(1)) c_n}$ particles, where, as before, $(c_n)_{n\in\N}$ is a sequence of positive numbers satisfying $\lim_{n\to\infty} n^{-1}c_n =\infty$. At time $nt$, the number of particles in such a process is approximately $\mu^{nt} e^{(a+o(1))c_n} = e^{(a+o(1))c_n}$, so that we do not see any changes on the logarithmic scale. The subcritical case plays no special role here, because the process is very unlikely to die out on the time scale $n$.

\begin{Lemma}\label{aux2a}
Let $(A_n)_{n\in\N}\subset \N$ be a sequence satisfying $\lin c_n^{-1} \log A_n = a$ for some $a>0$.
Then, for every $T>0$ and every sequence $(k_n)_{n\in\N}\subset \N_0$,
\begin{equation}\label{limit2}
\lin \sup_{0\leq t\leq T} \left| \frac 1 {c_n}\log^+\bigg(\sum_{i=1}^{A_n}X_{i,k_n}([nt])\bigg) - a\right| =0
\quad \text{{\rm a.s.}}
\end{equation}
\end{Lemma}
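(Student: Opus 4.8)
The plan is to follow the proof of Lemma~\ref{aux2} almost verbatim, the simplification being that here the norming sequence grows superlinearly: for $m\leq[nT]$ every quantity tied to the natural time scale $n$ --- the powers $\mu^{\pm m}$ and the non-extinction probabilities $p_m:=\Prob\{X_{1,0}(m)\geq1\}$ --- is of the form $e^{o(c_n)}$ and therefore disappears on the logarithmic scale $c_n$. In particular the subcritical case loses its special status and the Seneta--Heyde norming is not needed, so one crude exponential-Markov estimate handles all three regimes $\mu<1$, $\mu=1$, $\mu>1$ simultaneously.

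By the Borel--Cantelli lemma, and since $(X_{i,k})_i$ is identically distributed in $k$ so that $k_n$ may be replaced by $0$, it suffices to prove that for every $\varepsilon\in(0,a)$ the series $\sum_{n\geq1}\Prob\{\max_{0\leq r\leq[nT]}|c_n^{-1}\log^+(\sum_{i=1}^{A_n}X_{i,0}(r))-a|>\varepsilon\}$ converges. Because $ac_n\pm\varepsilon c_n>0$ for large $n$ and $\log^+ s>x\Leftrightarrow s>e^{x}$, $\log^+ s<x\Leftrightarrow s<e^{x}$ for $x>0$, Boole's inequality bounds the $n$-th term by $I_1(n)+I_2(n)$, where
$$
I_1(n):=\sum_{r=0}^{[nT]}\Prob\bigg\{\sum_{i=1}^{A_n}X_{i,0}(r)>e^{(a+\varepsilon)c_n}\bigg\},\qquad
I_2(n):=\sum_{r=0}^{[nT]}\Prob\bigg\{\sum_{i=1}^{A_n}X_{i,0}(r)<e^{(a-\varepsilon)c_n}\bigg\}.
$$
That $\sum_n I_1(n)<\infty$ follows at once from Markov's inequality: since $\E\sum_{i=1}^{A_n}X_{i,0}(r)=A_n\mu^{r}$, $A_n\leq e^{(a+\varepsilon/2)c_n}$ for large $n$, and $\sum_{r=0}^{[nT]}\mu^{r}\leq([nT]+1)(1\vee\mu^{[nT]})=e^{o(c_n)}$ because $c_n/n\to\infty$, one gets $I_1(n)\leq e^{-(\varepsilon/2)c_n+o(c_n)}$.

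For $\sum_n I_2(n)<\infty$ set $X^\ast_{i,0}(l):=\mu^{-l}X_{i,0}(l)$ as in Lemma~\ref{aux2}. From $\mu^{r}\geq\mu^{[nT]}\wedge1$ for $r\leq[nT]$ and $(\mu^{[nT]}\wedge1)^{-1}=e^{o(c_n)}$ one has, for large $n$, the inclusion $\{\sum_{i=1}^{A_n}X_{i,0}(r)<e^{(a-\varepsilon)c_n}\}\subseteq\{\sum_{i=1}^{A_n}X^\ast_{i,0}(r)<e^{(a-\varepsilon/2)c_n}\}$, so it is enough to estimate $\sum_{r=0}^{[nT]}\Prob\{\sum_{i=1}^{A_n}X^\ast_{i,0}(r)<e^{\beta c_n}\}$ for a fixed $\beta\in(0,a)$. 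Fix $\beta'\in(\beta,a)$, so that $A_n\geq e^{\beta'c_n}$ eventually. Running the chain \eqref{import}--\eqref{import2} with $u=1$, $c_n$ in place of $n$ and $[nT]$ in place of $r_n$ (Markov's inequality, the submartingale property of $(e^{-X^\ast_{1,0}(l)})_{l\in\N_0}$ which makes $l\mapsto\E e^{-X^\ast_{1,0}(l)}$ nondecreasing, and $\E e^{-X^\ast_{1,0}(m)}\leq1-p_m(1-e^{-\mu^{-m}})\leq\exp(-p_m(1-e^{-\mu^{-m}}))$) gives, for $r\leq[nT]$ and large $n$,
$$
\log\Prob\bigg\{\sum_{i=1}^{A_n}X^\ast_{i,0}(r)<e^{\beta c_n}\bigg\}\;\leq\; e^{\beta c_n}-e^{\beta'c_n}\,p_{[nT]}\big(1-e^{-\mu^{-[nT]}}\big).
$$

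The one step requiring care --- what I regard as the main obstacle, though a mild one --- is to show $p_{[nT]}(1-e^{-\mu^{-[nT]}})=e^{-o(c_n)}$, uniformly over the three regimes. Using $1-e^{-x}\geq(1-e^{-1})(x\wedge1)$ for $x\geq0$ this reduces to $p_{[nT]}(\mu^{-[nT]}\wedge1)=e^{-o(c_n)}$. When $\mu\leq1$ we have $\mu^{-[nT]}\wedge1=1$ and Lemma~\ref{aux1} yields $p_m\geq\mu^{m}e^{-\delta m}$ eventually for every $\delta>0$, whence $p_{[nT]}\geq e^{-([nT](|\log\mu|+\delta))}=e^{-o(c_n)}$ because $[nT]=o(c_n)$; when $\mu>1$ we have $\mu^{-[nT]}\wedge1=\mu^{-[nT]}=e^{-o(c_n)}$ while $p_m$ decreases to the positive non-extinction probability. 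Hence $e^{\beta'c_n}p_{[nT]}(1-e^{-\mu^{-[nT]}})=e^{\beta'c_n-o(c_n)}$ eventually exceeds $2e^{\beta c_n}$ (as $\beta'>\beta$), so the last display is $\leq-e^{\beta c_n}$ for large $n$, giving $\sum_{r=0}^{[nT]}\Prob\{\sum_{i=1}^{A_n}X^\ast_{i,0}(r)<e^{\beta c_n}\}\leq([nT]+1)e^{-e^{\beta c_n}}$, which is summable in $n$ since $c_n\to\infty$. Taking $\beta=a-\varepsilon/2$ completes the argument; apart from this verification the proof is a strict simplification of that of Lemma~\ref{aux2}.
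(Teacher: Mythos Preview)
Your proof is correct and follows essentially the same route as the paper's: the same Borel--Cantelli reduction, Boole/Markov split into upper and lower tails, and the same exponential-Markov/submartingale bound for the lower tail invoking Lemma~\ref{aux1}. The only minor difference is that the paper writes $J_2(n)$ directly in terms of $X^\ast$, picks $u=\mu^{[nT]}$, and details only the subcritical case (leaving the others implicit by analogy with Lemma~\ref{aux2}), whereas your choice $u=1$ together with the observation $p_{[nT]}(\mu^{-[nT]}\wedge1)=e^{-o(c_n)}$ handles all three regimes in one stroke --- a mild streamlining, not a different argument.
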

\begin{proof}
The proof  is similar to that of Lemma \ref{aux2}. Therefore we only give an outline. It suffices to prove that  for all $\varepsilon\in (0,a)$,
$$
\sum_{n\geq 1}\Prob\bigg\{\underset{0\leq t\leq T}{\sup}\,\bigg|\log^+\bigg(\sum_{i=1}^{A_n}X_{i,0}([nt])\bigg)-ac_n\bigg|>\varepsilon c_n\bigg\}<\infty.
$$
The last probability is bounded from above by $J_1(n)+J_2(n)$, where
\begin{align*}
J_1(n) &:= \sum_{r=0}^{[nT]}\Prob\bigg\{\sum_{i=1}^{A_n}X_{i,0}(r)>\exp((a+\varepsilon)c_n)\bigg\}\\
J_2(n) &:= \sum_{r=0}^{[nT]}
\Prob\bigg\{\sum_{i=1}^{A_n}X^\ast_{i,0}(r)<\mu^{-r}\exp((a-\varepsilon)c_n)\bigg\}.
\end{align*}
For sufficiently large $n$, we have $|\log A_n-ac_n|\leq \frac \varepsilon 2 c_n$.
We use Markov's inequality to obtain
$$
\sum_{n\geq 1}J_1(n)
\leq
\sum_{n\geq 1}  A_n e^{-(a+\varepsilon)c_n}  \sum_{r=0}^{[nT]} \mu^r
\leq C+
\sum_{n\geq 1}e^{-\frac12 \varepsilon c_n}\sum_{r=0}^{[nT]}\mu^r<\infty,
$$ where the finiteness follows from the fact that
$e^{-\frac 12 \varepsilon c_n}$ decreases superexponentially in view of $\lin n^{-1}c_n=\infty$. While analyzing $J_2(n)$ we only treat the subcritical case. A counterpart of \eqref{import2} reads
$$
\log\Prob\bigg\{\sum_{i=1}^{A_n}X^\ast_{i,0}(r)<\mu^{-r}e^{(a-\varepsilon)c_n}\bigg\}\notag\leq u\mu^{-[nT]}e^{(a-\varepsilon)c_n}-e^{(a-\frac 12 \varepsilon)c_n} (1-e^{-\mu^{-[nT]}u})p_{[nT]}
$$
for large enough $n$, $r\leq [nT]$ and any $u>0$. On setting $u=\mu^{[nT]}$ the right-hand side takes the form
$$ e^{(a-\varepsilon)c_n}(1-(1-e^{-1}) e^{\frac 12 \varepsilon c_n} p_{[nT]}).$$ As $n\to\infty$, this goes to $-\infty$ superexponentially fast by \eqref{subexp}. The proof of Lemma \ref{aux2a} is complete.
\end{proof}

\begin{Lemma}\label{lem:log_+_subadd}
For all $x,y\geq 0$, we have
\begin{equation}\label{ineq}
\log^+x\leq \log^+(x+y)\leq \log^+ x+\log^+y+2\log 2.
\end{equation}
\end{Lemma}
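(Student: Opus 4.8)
The plan is to treat the two inequalities separately, both being elementary. The left-hand inequality is immediate: the map $t\mapsto \log^+ t$ is nondecreasing and $x\leq x+y$, so $\log^+ x\leq \log^+(x+y)$.

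For the right-hand inequality I would exploit the elementary bound $x+y\leq (1+x)(1+y)$, valid for all $x,y\geq 0$. Since $(1+x)(1+y)\geq 1$, monotonicity of $\log^+$ together with the fact that $\log^+ t=\log t$ for $t\geq 1$ gives
$$
\log^+(x+y)\leq \log^+\big((1+x)(1+y)\big)=\log(1+x)+\log(1+y).
$$
It then remains to verify the one-variable estimate $\log(1+x)\leq \log^+ x+\log 2$ for every $x\geq 0$. For $x\geq 1$ we have $1+x\leq 2x$, hence $\log(1+x)\leq \log 2+\log x=\log 2+\log^+ x$; for $0\leq x<1$ we simply have $\log(1+x)<\log 2=\log 2+\log^+ x$. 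Applying this bound once with $x$ and once with $y$ and adding yields $\log^+(x+y)\leq \log^+ x+\log^+ y+2\log 2$, as claimed.

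There is essentially no obstacle here; the only mild point is the regime $x\in[0,1)$ (or $y\in[0,1)$), where $\log^+$ vanishes but $\log(1+\cdot)$ does not, and this is dispatched by the crude bound $\log(1+x)<\log 2$. As a remark, one could alternatively start from $x+y\leq 2\max(x,y)$ and check by a case distinction that $\log^+(2t)\leq \log 2+\log^+ t$, which leads to $\log^+(x+y)\leq \log 2+\max(\log^+ x,\log^+ y)\leq \log^+ x+\log^+ y+\log 2$; this is slightly sharper than the stated inequality, so the constant $2\log 2$ is comfortably sufficient for our purposes.
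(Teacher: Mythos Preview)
Your proof is correct and follows essentially the same route as the paper: monotonicity for the left inequality, and for the right inequality the chain $\log^+(x+y)\leq \log(1+x)+\log(1+y)\leq \log^+x+\log^+y+2\log 2$, using the one-variable bound $\log(1+x)\leq \log^+x+\log 2$. The only cosmetic difference is that the paper phrases the intermediate step as subadditivity of $t\mapsto\log(1+t)$ (i.e.\ $\log(1+x+y)\leq\log(1+x)+\log(1+y)$), whereas you write $x+y\leq(1+x)(1+y)$; these are the same observation.
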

\begin{proof}
While the left-hand inequality follows by monotonicity, the
right-hand inequality is a consequence of $$\log^+ x\leq \log
(1+x)\leq \log^+ x+\log 2,\quad x\geq 0$$ and the subadditivity of
$x\mapsto \log (1+x)$, namely,
\begin{eqnarray*}
\log^+(x+y)\leq \log (1+x+y)\leq \log (1+x)+\log
(1+y)\leq \log^+ x+\log^+y+2\log 2.
\end{eqnarray*}
\end{proof}

\section{Proofs of the main results}

\begin{proof}[Proof of Theorem \ref{main1}]
It is a standard fact of the extreme-value theory that
condition \eqref{2} entails the point processes convergence
\begin{equation*}
N_n := \sum_{k\geq 0}\1_{\{J_{k}\neq 0\}}\varepsilon_{(\frac kn,\,
\frac 1 n \log J_{k})}\quad \Rightarrow\quad
N^{(c,1)}
\end{equation*}
weakly on $M_p$, as $n\to\infty$; see, for instance, Corollary 4.19 (ii) on p.~210 in \cite{Resnick:2008}.

\vspace*{2mm}
\noindent
\textsc{Step 1: Passing to a.s.\ convergence.}
By the Skorokhod representation theorem there are versions $\widehat N_n$ and $\widehat N^{(c,1)}$ of $N_n$ and $N^{(c,1)}$ (defined on some new probability space) which converge
a.s.  That is, with probability $1$,
\begin{equation}\label{7}
\widehat{N}_n:=\sum_{k\geq 0} \1_{\{y_k^{(n)}\neq -\infty\}} \varepsilon_{(\frac kn,\,
y_k^{(n)})}\quad \to\quad
\widehat{N}^{(c,1)}=\sum_{m\geq 0}\varepsilon_{(\tau_m,\,y_m)},
\end{equation}
vaguely on $M_p$, as $n\to\infty$.
Extending, if necessary, the probability space on which $(\widehat{N}_n)_{n\in\N}$ and $\widehat{N}^{(c,1)}$ are defined,  we can independently construct GW processes $(\widehat{X}_{i,k})_{i\in \N, k\in\N_0}$ having the same law as $({X}_{i,k})_{i\in \N, k\in\N_0}$. Write
$$
\widehat{Z}_n(t):=\sum_{k=0}^{[nt]} \sum_{i=1}^{\exp(n y_k^{(n)})} \widehat{X}_{i,k}([nt]-k),\quad n\in\mn_0,\quad  t\geq 0,
$$
so that for each $n\in\N$, the distributions of the processes $(\widehat Z_n(t))_{t\geq 0}$ and $(Y_{[nt]})_{t\geq 0}$ coincide. Fix some $T>0$ and let $d_T$ be the standard $J_1$-metric on the Skorokhod space $D[0,T]$. Our aim is to prove that with probability $1$,
\begin{equation}\label{eq:need}
\lim_{n\to\infty} d_T\left(\frac 1n \log^+ \widehat{Z}_n(\cdot),  \underset{\tau_k\leq
\cdot}{\sup}\big(y_k + (\cdot-\tau_k)\log\mu\big)\right) =0.
\end{equation}

\vspace*{2mm}
\noindent
\textsc{Step 2: Estimate for non-extremal order statistics.}
We shall decompose the process $(\widehat{Z}_n(t))_{t\geq 0}$ into the contribution coming from times with extremely active immigration, and the contribution of all the other times.
For a truncation parameter $0< \gamma < 1$, put
\begin{align*}
\widehat Z_{n}^{(\leq \gamma)}(t)&:=\sum_{k=0}^{[nt]}\1_{\{y_k^{(n)}\leq \gamma\}}\sum_{i=1}^{\exp(ny_k^{(n)})}\widehat X_{i,k}([nt]-k),\\
\widehat Z_{n}^{(>\gamma)}(t) &:=\sum_{k=0}^{[nt]}\1_{\{y_k^{(n)}>\gamma\}}\sum_{i=1}^{\exp(ny_k^{(n)})}\widehat X_{i,k}([nt]-k),
\end{align*}
so that
\begin{equation}\label{equ}
\widehat Z_{n}(t) = \widehat Z_{n}^{(\leq \gamma)}(t) + \widehat Z_{n}^{(>\gamma)}(t),\quad n\in\N_0, \; t\geq 0.
\end{equation}
Suppose for a moment that with probability $1$,
\begin{equation}\label{eq:need1}
\underset{\gamma\to 0+}{\lim}\,\underset{n\to\infty}{\lim\sup}\,d_T\bigg(\frac 1n\log^+ \widehat Z_n^{(>\gamma)}(\cdot),\,\underset{\tau_k\leq\cdot}{\sup}\,\big(y_k + (\cdot-\tau_k)\log\mu\big)\bigg)=0,
\end{equation}
where $\gamma$ is restricted to the set $\{1/m\colon m=2,3,\ldots\}$. Let us argue that~\eqref{eq:need1} implies~\eqref{eq:need}.

\vspace*{2mm}
\noindent
\textit{Proof in the case $\mu\leq 1$.} Using \eqref{equ} in combination with Lemma~\ref{lem:log_+_subadd} yields
\begin{equation*}
\log^+(\widehat Z_{n}^{(>\gamma)}(t))
\leq
\log^+(\widehat Z_{n}(t))
\leq
\log^+(\widehat Z_{n}^{(>\gamma)}(t)) + \log^+(\widehat Z_{n}^{(\leq \gamma)}(t))+2\log 2.
\end{equation*}
Since the processes $(\widehat Z_n^{(\leq \gamma)}(t))_{t\geq 0}$ and $(Y_{[nt]}^{(\leq \gamma)})_{t\geq 0}$ (c.f.~\eqref{eq:def_Y_proc_truncated}) have the same distribution, we can use Lemma~\ref{aux3} and the Borel-Cantelli lemma to conclude that with probability $1$,
$$
\underset{n\to\infty}{\lim\sup}\,\underset{0\leq t\leq T}{\sup}\, \frac 1n \log^+(\widehat Z_n^{(\leq \gamma)}(t)) \leq \gamma.
$$
It follows from the last two inequalities that with probability $1$,
$$
 \underset{n\to\infty}{\lim\sup}\,\underset{0\leq t\leq T}{\sup}\, \left|\frac 1n \log^+(\widehat Z_{n}(t)) - \frac 1n \log^+(\widehat Z_{n}^{(>\gamma)}(t))\right| \leq \gamma.
$$
The Skorokhod distance is majorized by the sup-distance, whence we conclude that with probability $1$,
$$
\underset{n\to\infty}{\lim\sup}\, d_T\left(\frac 1n \log^+ \widehat Z_{n}(\cdot), \frac 1n \log^+ \widehat Z_{n}^{(>\gamma)}(\cdot)\right) \leq \gamma.
$$
The triangle inequality entails  that~\eqref{eq:need1} implies~\eqref{eq:need}.

\vspace*{2mm}
\noindent
\textit{Proof in the case $\mu > 1$.}
Our aim is to obtain an upper bound for $\log^+(\widehat Z_{n}(t))$. Since the processes $(\widehat Z_n^{(\leq \gamma)}(t))_{t\geq 0}$ and $(Y_{[nt]}^{(\leq \gamma)})_{t\geq 0}$ have the same distribution,  Lemma~\ref{aux3} in conjunction with the Borel-Cantelli lemma allows us to conclude that with probability $1$,
$$
\underset{n\to\infty}{\lim\sup}\, \underset{0\leq t\leq T}{\sup}\, \frac 1n \log^+ \left(\mu^{-[nt]} \widehat  Z_n^{(\leq \gamma)}(t)\right) \leq \gamma.
$$
Since $\log x\leq \log^+ x$ and $[nt]\leq nt$, it follows that with probability $1$, for sufficiently large $n$,
\begin{equation}\label{eq:est1}
\widehat  Z_n^{(\leq \gamma)}(t) \leq e^{n(t\log \mu+ 2\gamma)}, \quad 0\leq t \leq T.
\end{equation}
Noting that $e^{n(t\log \mu+ 2\gamma)}\geq e^{2n\gamma}>1$ for sufficiently large $n$ and all $t\geq 0$, we obtain the estimate
\begin{align*}
\log^+(\widehat Z_{n}(t))
&\leq
\log^+\left(\widehat Z_{n}^{(>\gamma)}(t)  + e^{n(t\log \mu + 2\gamma)}\right)\\
&\leq
\log\left(\widehat Z_{n}^{(>\gamma)}(t)  + e^{n(t\log \mu + 2\gamma)}\right)\\
&\leq
n(t\log \mu - 2\sqrt \gamma) + \log\left( e^{-n(t\log \mu - 2\sqrt \gamma)} \widehat Z_{n}^{(>\gamma)}(t) + e^{4n\sqrt \gamma}\right).
\end{align*}
Using again the inequality $\log x\leq \log^+ x$ and then Lemma~\ref{lem:log_+_subadd}, we arrive at
\begin{equation}\label{eq:222}
\log^+(\widehat Z_{n}(t))
\leq
n(t\log \mu - 2\sqrt \gamma)  + \log^+ \left(e^{-n(t\log \mu - 2\sqrt \gamma)} \widehat Z_{n}^{(>\gamma)}(t)\right) + 4n\sqrt \gamma +2\log 2.
\end{equation}
Below we shall prove that with probability $1$ there exist a random $0< \gamma_0 < 1$ and $n_1\in\N$ such that for all $0<\gamma<\gamma_0$ and $n>n_1$, we have
\begin{equation}\label{eq:333}
\widehat Z_{n}^{(>\gamma)}(t) \geq  e^{n(t\log \mu - 2\sqrt \gamma)}, \quad \sqrt \gamma\leq t\leq  T.
\end{equation}
Given~\eqref{eq:333}, we conclude that $\log^+$ on the right-hand side of~\eqref{eq:222} can be replaced by $\log$, thus yielding for all $\sqrt \gamma\leq t\leq  T$ the estimate
\begin{equation*}
\log^+(\widehat Z_{n}^{(>\gamma)}(t)) \leq \log^+(\widehat Z_{n}(t))
\leq
4n\sqrt \gamma + \log^+ (\widehat Z_{n}^{(>\gamma)}(t)) + 2\log 2,
\end{equation*}
where the first inequality is an immediate consequence of~\eqref{equ}. For $0\leq t \leq \sqrt \gamma$, Lemma~\ref{lem:log_+_subadd} and~\eqref{eq:est1} yield
$$
\log^+(\widehat Z_{n}^{(>\gamma)}(t)) \leq \log^+(\widehat Z_{n}(t))
\leq
n(\sqrt{\gamma} \log \mu + 2\gamma) + \log^+ (\widehat Z_{n}^{(>\gamma)}(t)) + 2\log 2.$$
From now on, we can argue as in the case $\mu\leq 1$ to conclude that~\eqref{eq:need1} implies~\eqref{eq:need}

\vspace*{2mm}
\noindent
\textit{Proof of~\eqref{eq:333}}.  First we prove that with probability $1$ there is a $\gamma_0>0$ such that for  all  $0<\gamma<\gamma_0$ at least one atom $(\tau, y)$ of the point process $\widehat N^{(c,1)}$ satisfies $0<\tau <\sqrt \gamma$ and $y>\gamma$. Indeed, the number of points of $\widehat N^{(c,1)}$ in the set $(0,1/\sqrt{2k})\times (1/k,\infty)$ is Poisson-distributed with parameter $c\sqrt{k/2}$, for all $k\in\N$. Since $\sum_{k\geq 1} e^{-c\sqrt{k/2}}$ is finite,  the Borel-Cantelli lemma implies that
with probability $1$ we can find $k_0\in\N$ such that for every $k\geq k_0$ at least one atom $(\tau, y)$ of $\widehat N^{(c,1)}$ satisfies $0<\tau < 1/\sqrt{2k}$ and $y>1/k$. If $1/(k+1)\leq \gamma \leq  1/k$, then it follows $0<\tau <\sqrt \gamma$ and $y>\gamma$, so that we can take $\gamma_0 = 1/k_0$.

Since with probability $1$, $\widehat N_n$ converges to $\widehat N^{(c,1)}$ vaguely on $M_p$, there is a random $n_0\in\N$ such that for all $n\geq n_0$ at least one atom, say $(k_n/n, z_n)$, of $\widehat N_n$ satisfies $k_n/n<\sqrt \gamma$ and $z_n>\gamma$. In the following, we condition on the $\sigma$-field generated by $\widehat N^{(c,1)}$ and $\widehat N_n$, so that we can view $\gamma_0$ and $n_0$ as deterministic quantities.

Recall that we consider the case $\mu>1$. As has already been mentioned in the proof of Lemma \ref{aux2} (case $\mu>1$), 
there exists a function $L$ slowly varying at $\infty$ such that
$$
\frac{X_{1,0} (m)}{\mu^m L(\mu^m)} \to W \quad \text{a.s. as } m\to\infty,
$$
the limit random variable $W$ being a.s.\ positive on the survival event of the GW process $X_{1,0}$. It follows that there is $\varepsilon>0$ such that
$$
\Prob\left\{\inf_{m\in\N}\frac{\widehat X_{i,k} (m)}{\mu^m L(\mu^m)} > \varepsilon \right\} >\varepsilon, \quad i\in \N,~ k\in\N_0.
$$
Since $z_n>\gamma$ for $n\geq n_0$ we have
$$
\sum_{n\geq n_0} \Prob\left\{\inf_{m\in \N}\frac{\widehat X_{i,k_n} (m)}{\mu^m L(\mu^m)} \leq \varepsilon \text{ for all } i=1,\ldots, e^{nz_n}\right\}
<
\sum_{n\geq n_0} (1-\varepsilon)^{e^{nz_n}} < \infty.
$$
By the Borel-Cantelli lemma, for sufficiently large $n$, there is an immigrant $i_n$ arriving at time $k_n< n\sqrt \gamma$ whose offspring numbers satisfy $\widehat X_{i_n,k_n}(m)>\varepsilon \mu^m L(\mu^m)$ for all $m\in\N$. For all $\sqrt \gamma \leq t \leq T$ and sufficiently large $n\geq n_1$ we have
$$
\widehat Z_n^{(>\gamma)} (t) \geq \widehat X_{i_n,k_n}([nt] - k_n) >
\varepsilon \mu^{[nt] - k_n} L(\mu^{[nt] - k_n})
>
e^{n(t\log \mu - 2 \sqrt \gamma)},
$$
thereby completing the proof of~\eqref{eq:333}.

\vspace*{2mm}
\noindent
\textsc{Step 3: Enumerating the points.} In the following we prove~\eqref{eq:need1}.
Let $\mathcal{F}$ be the $\sigma$-field generated by $(y_k^{(n)})_{k\in\N_0, n\in\N}$ and $(\tau_k,\,
y_k)_{k\in\N_0}$. Until further notice we work conditionally on $\mathcal{F}$, so that all $\mathcal{F}$-measurable variables can be treated as deterministic constants. After discarding an event of probability $0$, we can assume that the points $(\tau_k,
y_k)$, $k\in\N_0$, have the following properties:

\vspace*{1mm}
\noindent {\sc Property 1}: $\tau_k \notin \{0, T\}$ and $y_k\notin \{1/2,1/3,\ldots,\infty\}$ for all $k\in\N_0$.

\vspace*{1mm}
\noindent {\sc Property 2}: $\sup_{\tau_k\leq t}\big(y_k+(t-\tau_k)\log\mu\big)\geq 0$ for all $t\geq 0$.

\vspace*{1mm}
\noindent {\sc Property 3}: $\tau_k\neq\tau_j$ a.s.\ for $k\neq j$.

\vspace*{1mm}
\noindent {\sc Property 4}: $(0,0)$ is an accumulation point of $(\tau_k,
y_k)$, $k\in\N_0$.

Relation \eqref{7} implies that  for large
enough $n$ and some $p\in \N$,
$$
\widehat{N}_n ([0,T]\times
(\gamma,\infty])=\widehat{N}^{(c,1)}([0,T]\times (\gamma,\infty])=p,
$$
where $p\neq 0$ if $\gamma$ is sufficiently small. Denote by
$(\bar{\tau}_j,\bar{y}_j)_{1\leq j\leq p}$ an enumeration of the
points of $\widehat{N}^{(c,1)}$ in $[0,T]\times (\gamma,\infty]$ with
$0<\bar{\tau}_1<\bar{\tau}_2<\ldots< \bar{\tau}_p<T$
and by $(\bar{\tau}_j^{(n)}, \bar{y}_j^{(n)})_{1\leq j\leq p}$
the analogous enumeration of the points of $\widehat{N}_n$ in $[0,T]\times (\gamma,\infty]$. Then, relation~\eqref{7} implies that
(possibly, after renumbering the points),
\begin{equation}\label{2.2}
\lim_{n\to\infty} \bar{\tau}^{(n)}_j = \bar{\tau}_j,
\quad
\lim_{n\to\infty} \bar{y}^{(n)}_j =\bar{y}_j,
\quad
j=1,\ldots,p.
\end{equation}
For sufficiently large $n\in\N$, $j=1,\ldots, p$, and $t\geq 0$, set
$$
U_{n,j}(t):=\sum_{i=1}^{\exp(n\bar{y}_j^{(n)})}\widehat X_{i,\,n\bar{\tau}_j^{(n)}}([nt]),
$$
so that $\widehat Z_n^{(>\gamma)}(t)=\sum_{\bar{\tau_j}^{(n)}\leq t}U_{n,j}(t-\bar{\tau}_j^{(n)})$. Put also
$$
Z_{n,j}(t):=n^{-1}\log^+U_{n,j}(t),
\quad
Z_j(t):=(\bar{y}_j+t \log\mu )^+.
$$
For later needs, we also define these functions to be zero for $t<0$.
We rewrite~\eqref{eq:need1} in the following form:
\begin{equation}\label{princ10}
\underset{\gamma\to 0+}{\lim}\,\underset{n\to\infty}{\lim\sup}\,d_T\bigg(\frac 1n \log^+\bigg(\sum_{\bar{\tau_j}^{(n)}\leq \cdot}U_{n,j}(\cdot-\bar{\tau}_j^{(n)})\bigg),\,\underset{\tau_k\leq\cdot}{\sup}\,\big(y_k+(\cdot-\tau_k)\log\mu \big)\bigg)=0.
\end{equation}

\vspace*{2mm}
\noindent
\textsc{Step 4: Proof of~\eqref{princ10}}.
By the triangle inequality, we have
\begin{multline}\label{tech1}
d_T\bigg(\frac 1n \log^+\bigg(\sum_{\bar{\tau}_j^{(n)}\leq \cdot}U_{n,j}(\cdot-\bar{\tau}_j^{(n)})\bigg),\underset{\tau_k\leq\cdot}{\sup}\,\big(y_k+(\cdot-\tau_k)\log\mu\big)\bigg) %
\\
\begin{aligned}
&\leq d_T\bigg(\frac 1n \log^+\bigg(\sum_{\bar{\tau}_j^{(n)}\leq \cdot}U_{n,j}(\cdot-\bar{\tau}_j^{(n)})\bigg), \underset{\bar{\tau}_j\leq \cdot}{\sup}\,Z_j(\cdot-\bar{\tau}_j)\bigg)
\\&+
\underset{0\leq t\leq T}{\sup}\,\big|\underset{\tau_k\leq t}{\sup}\,\big(y_k+(t-\tau_k)\log\mu\big)-\underset{\bar{\tau}_j\leq t}{\sup}\,Z_j(t-\bar{\tau}_j)\big|,
\end{aligned}
\end{multline}
where for the last term we have used the fact that the Skorokhod metric $d_T$ is dominated by the uniform metric on $[0,T]$. In the following, we estimate both terms on the right-hand side.

\vspace*{2mm}
\noindent
\textit{First term in~\eqref{tech1}.}
We intend to check that
\begin{equation}\label{inter}
\lin d_T\bigg(\frac 1n\log^+\bigg(\sum_{\bar{\tau}_j^{(n)}\leq \cdot}U_{n,j}(\cdot-\bar{\tau}_j^{(n)})\bigg),\, \underset{\bar{\tau}_j\leq \cdot}{\sup}\,Z_j(\cdot-\bar{\tau}_j)\bigg)=0 \quad \text{a.s.}
\end{equation}
In view of
\begin{eqnarray*}
\underset{\bar{\tau}_j^{(n)}\leq t}{\sup}\,\log^+\big(U_{n,j}(t-\bar{\tau}_j^{(n)})\big)&\leq& \log^+\bigg(\sum_{\bar{\tau}_j^{(n)}\leq t}U_{n,j}(t-\bar{\tau}_j^{(n)})\bigg)\\&\leq& \log^+\#\{j: \bar{\tau}_j^{(n)}\leq t\}+\underset{\bar{\tau}_j^{(n)}\leq t}{\sup}\,\log^+\big(U_{n,j}(t-\bar{\tau}_j^{(n)})\big)\\&\leq& \log p+\underset{\bar{\tau}_j^{(n)}\leq t}{\sup}\,\log^+\big(U_{n,j}(t-\bar{\tau}_j^{(n)})\big) \end{eqnarray*}
for $t\in [0,T]$, Equation~\eqref{inter} is equivalent to
\begin{equation}\label{inter2}
\lin d_T\bigg(\underset{\bar{\tau}_j^{(n)}\leq\cdot}{\sup}\,Z_{n,j}(\cdot-\bar{\tau}_j^{(n)}),\, \underset{\bar{\tau}_j\leq \cdot}{\sup}\,Z_j(\cdot-\bar{\tau}_j)\bigg)=0 \quad \text{a.s.}
\end{equation}
In view of \eqref{2.2}, an application of Lemma \ref{aux2} yields
\begin{equation}\label{imp2}
Z_{n,j}(t)\quad \to\quad Z_j(t),\quad j=1,\ldots, p
\end{equation}
a.s.\ uniformly on $[0,T]$ (recall that we work conditionally on $\mathcal{F}$).
Define $\lambda_n$ to be continuous and strictly increasing
functions on $[0,T]$ with $\lambda_n(0) =0$, $\lambda_n(T) =T$,
$\lambda_n(\bar{\tau}_j)=\bar{\tau}^{(n)}_j$ for
$j=1,\ldots,p$, and let $\lambda_n$ be linearly interpolated
elsewhere on $[0,T]$. It is easily seen that $\lin \underset{0\leq t\leq T}{\sup}\,|\lambda_n(t)-t|=0$.
This implies that
\begin{equation}\label{imp1}
\lin \underset{t\in[0,T]}{\sup}\,\big|Z_{n,j}\big(\lambda_n(t)-\bar{\tau}_j^{(n)}\big)-Z_j(t-\bar{\tau}_j)\big|=0,\quad j=1,\ldots,p
\end{equation}
a.s. Indeed, for $t\in [0,\bar{\tau}_j)$ we have $Z_{n,j}(\lambda_n(t)-\bar{\tau}_j^{(n)})=Z_j(t-\bar{\tau}_j)=0$.
Also, as a consequence of \eqref{2.2} and \eqref{imp2} we obtain the relation
$$
\lin\underset{t\in [0,T-\bar{\tau}_j]}{\sup}\,|Z_{n,j}(\lambda_n(t+\bar{\tau}_j)-\bar{\tau}_j^{(n)})-Z_j(t)|=0,
$$
which proves \eqref{imp1}.
Now \eqref{inter2} follows from
\begin{eqnarray*}
&&\underset{t\in[0,T]}{\sup}\,\big|\underset{\bar{\tau}_j^{(n)}\leq \lambda_n(t)}{\sup}\,Z_{n,j}\big(\lambda_n(t)-\bar{\tau}_j^{(n)}\big)-\underset{\bar{\tau}_j\leq t}{\sup}\,Z_j(t-\bar{\tau}_j)\big|\\&=&
\underset{t\in[0,T]}{\sup}\,\big|\underset{\bar{\tau}_j\leq t}{\sup}\,Z_{n,j}\big(\lambda_n(t)-\bar{\tau}_j^{(n)}\big)-\underset{\bar{\tau}_j\leq t}{\sup}\,Z_j(t-\bar{\tau}_j)\big|\\&\leq&
\underset{t\in[0,T]}{\sup}\,\sum_{\bar{\tau}_j\leq t}\big|Z_{n,j}\big(\lambda_n(t)-\bar{\tau}_j^{(n)}\big)-Z_j(t-\bar{\tau}_j)\big|\\&\leq& \sum_{j=1}^p \underset{t\in[0,T]}{\sup}\,\big|Z_{n,j}\big(\lambda_n(t)-\bar{\tau}_j^{(n)}\big)-Z_j(t-\bar{\tau}_j)\big|
\end{eqnarray*}
because the right-hand side converges to zero a.s.\ by \eqref{imp1}.

\vspace*{2mm}
\noindent
\textit{Second term in~\eqref{tech1}.}
Left with proving that
\begin{equation}\label{imp3}
\underset{\gamma\to 0+}{\lim}\,\underset{0\leq t\leq T}{\sup}\,\big|\underset{\tau_k\leq t}{\sup}\,\big(y_k+(t-\tau_k)\log\mu\big)-\underset{\bar{\tau}_j\leq t}{\sup}\,\big(\bar{y}_j+(t-\bar{\tau}_j)\log\mu\big)^+\big|=0,
\end{equation}
we first recall that the points $(\bar{\tau}_1, \bar{y_1}), \ldots, (\bar{\tau}_p, \bar{y}_p)$ belong to the collection $(\tau_k,y_k)_{k\in\mn_0}$. Hence, for $t\in [0,T]$,
\begin{eqnarray}\label{3}
\sup_{\tau_k\leq t}\big(y_k+(t-\tau_k)\log\mu\big)
\geq
\sup_{\bar{\tau}_j\leq t}\big(\bar{y}_j+(t-\bar{\tau}_j)\log\mu\big)^+,
\end{eqnarray}
where we also used that the supremum on the left-hand side is nonnegative by Property~2.
Pick now
$\tau_k\notin\{\bar{\tau}_1,\ldots, \bar{\tau}_p\}$ satisfying $\tau_k\leq t$.
Recall that all $y_k$ other than
$\bar{y}_1,\ldots, \bar{y}_p$ do not exceed $\gamma$.
If $\mu\leq 1$, we infer
\begin{equation*}
y_k+(t-\tau_k)\log\mu
\leq y_k
\leq \gamma
\leq \gamma + \sup_{\bar{\tau}_j\leq t}(\bar{y}_j+(t-\bar{\tau}_j)\log\mu)^+.
\end{equation*}
Together with \eqref{3} this proves~\eqref{imp3}.  In the following, let $\mu>1$. Fix some $\delta>0$. It suffices to show that for sufficiently small $\gamma>0$ we have
\begin{equation}\label{3_suff}
y_k+(t-\tau_k)\log\mu \leq \sup_{\bar{\tau}_j\leq t}\big(\bar{y}_j+(t-\bar{\tau}_j)\log\mu\big)^+ + \gamma+\delta
\end{equation}
for all $k\in\N_0$ such that $\tau_k \leq t$. If $y_k > \gamma$, then  $(\tau_k,y_k)$ is one of the points $(\bar{\tau}_1, \bar{y}_1),\ldots, (\bar{\tau}_p, \bar{y}_p)$, and~\eqref{3_suff} is evident. Let therefore $y_k\leq \gamma$. Then,
$$
y_k + (t-\tau_k) \log \mu \leq \gamma + t\log \mu.
$$
This immediately implies~\eqref{3_suff} if $t\leq \delta/\log \mu$. Therefore, let $t>\delta/\log \mu$.  If $\gamma>0$ is sufficiently small, then by Property~4 we can find a point $(\bar{\tau}_{j'}, \bar{y}_{j'})$ such that $\bar{\tau}_{j'} < \delta/\log \mu$. We infer
$$
\sup_{\bar{\tau}_j\leq t}\big(\bar{y}_j+(t-\bar{\tau}_j)\log\mu\big)^+
\geq
\bar{y}_{j'}+(t-\bar{\tau}_{j'})\log\mu
\geq t\log \mu -\delta.
$$
Taking the last two inequalities  together we arrive at~\eqref{3_suff}, which proves \eqref{3}. The proof of Theorem \ref{main1} is complete.
\end{proof}

The proof of Theorem \ref{main2} runs the same path as that of Theorem \ref{main1}. We note that the proof is essentially based on the convergence
\begin{equation*}
\sum_{k\geq 0}\1_{\{J_{k}\neq 0\}}\varepsilon_{\left(\frac kn,\,
\frac {\log J_{k}}{b_n}\right)}\quad \Rightarrow\quad
N^{(1,\alpha)},\quad n\to\infty
\end{equation*}
on $M_p$ and uses the corresponding part of Lemma \ref{aux3} together with Lemma \ref{aux2a} in which we take $c_n=b_n$. We refrain from discussing the details which are much simpler here. 

\vspace{1cm}
\noindent   {\bf Acknowledgements}  \quad
\footnotesize
A part of this work was done while A.~Iksanov was visiting M\"{u}nster in
July 2015, 2016. He gratefully acknowledges hospitality and the financial support by DFG SFB 878 ``Geometry, Groups and Actions''.

\normalsize

\end{document}